\documentclass{amsart}

\usepackage{amsmath,amsfonts,amsthm,amssymb}
\usepackage{graphicx,pinlabel, tikz}
\usetikzlibrary{positioning}
\usepackage{stmaryrd,comment}

\newcommand{\thicktilde}[1]{\mathbf{\tilde{\text{$#1$}}}}

\newtheorem{theorem}{Theorem}[section]
\newtheorem{corollary}[theorem]{Corollary}
\newtheorem{lemma}[theorem]{Lemma}
\newtheorem{proposition}[theorem]{Proposition}

\newtheorem*{mainthm}{Theorem~\ref{thm:main}}
\newtheorem*{binghandlebody}{Theorem~\ref{thm:binghandlebody}}

\newtheorem{question}[theorem]{Question} 
\newtheorem{claim}[theorem]{Claim} 
\theoremstyle{definition}
\newtheorem{definition}[theorem]{Definition}
\newtheorem{remark}[theorem]{Remark}

\newcommand{\pt}{\text{pt}}
\usepackage{xcolor}

\usepackage[margin=1.5in]{geometry}

\begin{document}

\begin{abstract}
We show that there exist split, orientable, 2-component surface-links in $S^4$ with non-isotopic splitting spheres in their complements. In particular, for non-negative integers $m,n$ with $m\ge 4$, the unlink $L_{m,n}$ consisting of one component of genus $m$ and one component of genus $n$ contains in its complement two smooth splitting spheres that are not topologically isotopic in $S^4\setminus L_{m,n}$. This contrasts with link theory in the classical dimension, as any two splitting spheres in the complement of a 2-component split link $L\subset S^3$ are isotopic in $S^3\setminus L$.
    \end{abstract}

\title[Non-isotopic splitting spheres in $S^4$]{Non-isotopic splitting spheres for a split link in $S^4$}

    \author[Mark Hughes]{Mark Hughes}
    \address{Brigham Young University\\Provo, UT, 84602 USA}
    \email{hughes@mathematics.byu.edu}
    
    \author[Seungwon Kim]{Seungwon Kim}
    \address{Sungkyunkwan University\\Suwon, Gyeonggi, 16419 Republic of Korea}
    \email{seungwon.kim@skku.edu}
    
    \author[Maggie Miller]{Maggie Miller}
    \address{{Stanford University\\Stanford, CA, 94305 USA}\hspace*{\fill}\linebreak \indent University of Texas at Austin\\Austin, TX, 78712 USA}
    \email{maggie.miller.math@gmail.com}
    
 \subjclass{}

 \thanks{MH was supported by a grant from the NSF (LEAPS-MPS-2213295).  MM is supported by a Clay Research Fellowship and a Stanford Science Fellowship.}
\maketitle 

\section{Introduction}
In this paper, we study {\emph{splitting spheres}} in surface-link complements in $S^4$. Unless otherwise specified, statements are in the topological locally-flat category. 

\begin{definition}
Let $L=L_1\sqcup L_2$ be a  compact, codimension-2 submanifold of $S^{n}$ with two connected components $L_1$ and $L_2$. A {\emph{splitting sphere}} $S$ for $L$ is an embedded $S^{n-1}$ in $S^n\setminus L$ such that $L_1$ and $L_2$ lie in distinct components of $S^n\setminus S$.
\end{definition}

It is a consequence of the Schoenflies theorem in dimension three that any two splitting spheres $S_1,S_2$ for a 2-component link $L$ in $S^3$ are isotopic in the complement of $L$. Laudenbach showed that more generally, any two homotopic 2-spheres in a 3-manifold are isotopic \cite{laudenbach}. However, the situation is quite different in dimension four. For example, Iida--Konno--Mukherjee-Taniguchi \cite{konno} (following work of Donaldson and Wall) gave many examples of pairs of separating 3-spheres in 4-manifolds that are not smoothly isotopic. In this setting, the 3-spheres are topologically isotopic and the smooth obstruction necessitates $b_2^+$ of the ambient 4-manifold to be positive. However, avoiding any $b_2$ requirements, Budney--Gabai \cite{budneygabai} recently produced many homotopic yet smoothly non-isotopic essential 3-spheres in $S^1\times S^3$. 

As a consequence, Budney--Gabai showed that the unknotted 2-sphere in $S^4$ bounds 3-balls in $S^4$ that are not smoothly isotopic rel.\ boundary. Inspired by their work, in \cite{handlebodypaper} we proved (using very different methods) that for $g\ge 2$, the unknotted genus-$g$ surface in $S^4$ bounds smooth handlebodies that are homeomorphic rel.\ boundary but not topologically isotopic rel.\ boundary. In this paper, we adapt our previous techniques to prove the following theorem.

\begin{theorem}\label{thm:main}
    Let $m,n$ be non-negative integers with $m\ge 4$.
    Let $L = L_{m,n}$ be the 2-component unlink in $S^4$ with one component of genus $m$ and one component of genus $n$. There exist two splitting spheres $S_1,S_2$ for $L$ such that $S_1$ and $S_2$ are not isotopic in $S^4\setminus L$.
\end{theorem}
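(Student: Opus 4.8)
The plan is to reduce Theorem~\ref{thm:main} to the main theorem of \cite{handlebodypaper} by cutting $S^4$ along a splitting sphere and recovering, from the piece containing the genus-$m$ surface, a $3$-dimensional handlebody bounded by $F_m$. Take $S_1$ to be the standard round splitting sphere, so that the $4$-ball $A_1$ it cuts off on the genus-$m$ side contains $F_m$ as the standard unknotted genus-$m$ surface, bounding there the standard handlebody. To build $S_2$, I would apply to $S^4$ a localized modification of barbell/Budney--Gabai type, supported in a ball that meets only the genus-$m$ component of $L$ and modeled exactly on the construction used in \cite{handlebodypaper} to produce a nonstandard handlebody bounded by an unknotted genus-$g$ surface; equivalently, one tubes $S_1$ along an explicitly prescribed configuration sitting near $F_m$. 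Since the modification is supported away from $F_n$ and preserves $L$ setwise, $S_2$ is again a splitting sphere for $L$, and the hypothesis $m\ge 4$ is what provides enough room to carry it out (the analogue of the hypothesis $g\ge 2$ in \cite{handlebodypaper}).

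Now suppose, for contradiction, that $S_1$ and $S_2$ were topologically isotopic in $S^4\setminus L$. Promoting the isotopy to an ambient one, we obtain a self-homeomorphism $\Phi$ of $S^4$ with $\Phi(L)=L$ that is isotopic to the identity through homeomorphisms preserving $L$, and with $\Phi(S_1)=S_2$. By the topological Schoenflies theorem in dimension $4$, each $S_i$ (being locally flat, hence bicollared) bounds a topological $4$-ball on each side; let $A_{S_i}$ be the side containing $F_m$. Because $\Phi$ is isotopic to the identity rel.\ $L$ it preserves the labelling of sides, so $\Phi$ restricts to a homeomorphism $(A_{S_1},F_m)\to(A_{S_2},F_m)$ compatible with how these balls sit inside $S^4\setminus\nu(F_n)$.

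The remaining, and crucial, step is to extract from each splitting sphere $S$ a handlebody $H_S\subset S^4$ with $\partial H_S=F_m$ in such a way that (i) the isotopy class of $H_S$ rel.\ $F_m$ depends only on the isotopy class of $S$ in $S^4\setminus L$, and (ii) $H_{S_1}$ and $H_{S_2}$ are isotopic rel.\ $F_m$ to the two handlebodies shown in \cite{handlebodypaper} to be homeomorphic but not topologically isotopic rel.\ boundary. One builds $H_S$ by gluing to the ball $A_S$ a fixed standard model for the complementary region, matching the fixed way $F_m$ meets its collar, and reading off a handlebody structure exactly as for the standard $S_1$; claim (ii) is then essentially built into the construction of $S_2$, and claim (i) is proved by cutting the ambient isotopy of the previous paragraph along $S$ and pushing the resulting homeomorphism of the $F_m$-side forward, after first arranging that it respects the fixed complementary model. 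Granting (i) and (ii), the existence of $\Phi$ would force $H_{S_1}$ and $H_{S_2}$ to be isotopic rel.\ $F_m$, contradicting \cite{handlebodypaper}; hence $S_1$ and $S_2$ are not isotopic in $S^4\setminus L$. The main obstacle is precisely the naturality statement (i): one must show that $S\mapsto H_S$ is well defined up to isotopy rel.\ $F_m$ and survives an ambient isotopy of $S$, all within the topological locally-flat category, and this is where genuine rigidity input is needed --- namely the essential uniqueness of the relevant standard $3$-manifold embedding / handlebody structure recorded in Lemma~\ref{lem:stong1} --- rather than mere homological bookkeeping.
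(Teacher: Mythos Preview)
Your outline is a genuinely different strategy from the paper's, and it has two real gaps that are not just matters of writing-up.

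\textbf{First gap: the construction of $S_2$.} You describe $S_2$ as obtained by a ``barbell/Budney--Gabai type'' modification ``modeled exactly on the construction used in \cite{handlebodypaper}''. But the construction in \cite{handlebodypaper} is not a local modification in $S^4$: it produces nonstandard handlebodies as the two halves of a nontrivial simple $3$-knot in $S^5$ with controlled Morse function. There is no barbell-style recipe in \cite{handlebodypaper} that one can simply transplant to act on a splitting sphere, and in fact the present paper explicitly records (Question~\ref{question:budneygabai} and the paragraph after it) that adapting the Budney--Gabai obstruction to separating $3$-spheres in $S^4\setminus\nu(L)$ appears technically difficult. So as written, you have not actually specified $S_2$.

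\textbf{Second gap: the extraction $S\mapsto H_S$.} You yourself flag this as ``the main obstacle'', and it is. A splitting sphere $S$ hands you a topological $4$-ball $A_S$ containing $F_m$, but $F_m$ does not bound a canonical handlebody in $A_S$: indeed, the entire content of \cite{handlebodypaper} is that even in $S^4$ there are several such handlebodies up to isotopy rel.\ boundary. Your proposed fix---glue on a fixed model of the complementary ball and ``read off'' a handlebody---does not determine a handlebody, and the naturality statement (i) (that the output depends only on the isotopy class of $S$) is exactly what would need a new theorem, not a lemma already on hand. The reference to Lemma~\ref{lem:stong1} does not supply this; no such uniqueness statement for handlebodies bounded by $F_m$ is available, and if it were it would contradict \cite{handlebodypaper}.

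For comparison, the paper avoids both problems by working one dimension up. It takes a nontrivial simple $3$-knot $P\subset S^5$ (cross-section the $2$-twist spun trefoil), forms the Bing double $B(P)=P_1\sqcup P_2$, and arranges via Proposition~\ref{prop:bingmorse} that $B(P)\cap h^{-1}(0)$ is the unlink $L_{4,0}$. Lemma~\ref{lem:bdryparallel} makes each half $B(P)\cap h^{-1}[0,\pm\infty)$ boundary-parallel, yielding splitting spheres $S_1,S_2$ for $L_{4,0}$ together with properly embedded $4$-balls $B_1,B_2$ in the two $5$-balls. If $S_1$ were isotopic to $S_2$ in $S^4\setminus L$, one could carry $B_1$ along and glue it to $B_2$ to produce a splitting $4$-sphere for $B(P)$ in $S^5$; but Lemma~\ref{lemma:bingnotsplit} says $B(P)$ is not split when $P$ is nontrivial. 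Higher genera are obtained by adding cancelling index-$1$/index-$2$ pairs. No map from splitting spheres to handlebodies is ever needed: the obstruction lives in $S^5$, not in $S^4$.
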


Here, an {\emph{unlink}} in $S^4$ is a split union of {\emph{unknots}}, while an {\emph{unknot}} in $S^4$ is a surface that bounds a 3-dimensional 1-handlebody in $S^4$. For each $g$, the genus-$g$ unknot is unique up to isotopy.  Note that the conclusion of Theorem~\ref{thm:main} greatly differs from those of \cite{budneygabai} or \cite{konno}: the 3-spheres we obtain are separating, not topologically isotopic, and are contained in a 4-manifold $(S^4\setminus\nu(L_{m,n})$) with $b_2^+=0$.

\begin{remark}\label{rem:smooth}
    In the proof of Theorem~\ref{thm:main}, the splitting 3-spheres $S_1,S_2$ can be taken to be smoothly embedded in the complement of the smooth unlink $L$, and each divide $S^4$ into two smooth 4-balls. Yet, the splitting spheres $S_1,S_2$ are not even topologically isotopic.
\end{remark}

\begin{remark}
    Theorem~\ref{thm:main} can also be extended to the case when the pair $(m,n)$ is in the set $\{(2,2),(2,3),(3,3)\}$ (see Remark \ref{rem:proof22}). 
    \end{remark}

Theorem~\ref{thm:main} may seem counterintuitive, since each of $S_1,S_2$ bound a 4-ball in $S^4$ and hence are isotopic in $S^4$ \cite{quinn}. However, to construct an isotopy from $S_1$ to $S_2$, we would generally first shrink the ball bounded by $S_1$ until $S_1$ and $S_2$ were disjoint, then isotope $S_1$ along the $S^3\times I$ cobounded by $S_1$ and $S_2$ (which is where we make use of \cite{quinn}). In the setting of a link complement, if $S_1,S_2\subset S^4\setminus L$ were disjoint then they would still cobound an $S^3\times I$ and would hence be isotopic. However, shrinking a 4-ball $B$ bounded by $S_1$ in order to separate $S_1$ and $S_2$ is now problematic, since one component of $L$ lies in $B$.

Moreover, we point out that while uniqueness of splitting spheres for 2-component links in $S^3$ follows from the 3-dimensional Schoenflies theorem, this argument does not extend to higher dimensions. In dimension three, we remove intersections between a pair of splitting spheres using a standard innermost circle argument: we surger one of the splitting spheres $S_1$ along a disk contained in the other sphere $S_2$ to split $S_1$ into two 2-spheres, one of which is a splitting sphere isotopic to $S_1$ that intersects $S_2$ in fewer circles. In contrast, two splitting 3-spheres in $S^4$ intersect in a surface, which we expect to include positive-genus components. Surgering one splitting sphere using such an intersection will split it into two 3-manifolds, but in general neither needs be a 3-sphere.

Using the proof of Theorem~\ref{thm:main}, we also prove the following link version of the main theorem of \cite{handlebodypaper}.

\begin{theorem}\label{thm:binghandlebody}
  Let $m,n$ be non-negative integers with $m\ge 4$. There exist two smooth, properly embedded links of handlebodies $H_1\sqcup H_2$ and $\thicktilde{H}_1\sqcup \thicktilde{H}_2$ in $B^5$, with $g(H_1)=g(\thicktilde{H_1})=m$ and $g(H_2)=g(\thicktilde{H}_2)=n$, such that the following are true:
    \begin{itemize}
        \item The 2-component links $H_1\sqcup H_2$ and $\thicktilde{H}_1\sqcup \thicktilde{H}_2$ are each smoothly boundary-parallel in $B^5$.
        \item The boundaries agree componentwise, i.e., $\partial H_1=\partial \thicktilde{H}_1, \partial H_2=\partial\thicktilde{H}_2$.
        \item For each $i=1,2$, the handlebodies $H_i,\thicktilde{H}_i$ are smoothly isotopic rel.\ boundary. 
        \item The 2-component links $H_1\sqcup H_2$ and $\thicktilde{H}_1\sqcup \thicktilde{H}_2$ are not topologically isotopic rel.\ boundary.
    \end{itemize}
\end{theorem}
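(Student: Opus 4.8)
The plan is to build the two links of handlebodies directly out of the two splitting spheres provided by Theorem~\ref{thm:main}, and then to argue that an isotopy rel boundary between the links would recover an isotopy of the splitting spheres in $S^4\setminus L$, contradicting Theorem~\ref{thm:main}. Identify $S^4$ with $\partial B^5$, let $L=L_{m,n}\subset S^4$, and let $S_1,S_2\subset S^4\setminus L$ be the splitting spheres of Theorem~\ref{thm:main}; by Remark~\ref{rem:smooth} they are smooth and each cuts $S^4$ into two smooth $4$-balls, one containing $L_1$ and one containing $L_2$. Given such an $S$, push it into the interior of $B^5$ to obtain a smooth, properly embedded $4$-ball $\widehat{S}$ with $\partial\widehat{S}=S$, which separates $B^5$ into two smooth $5$-balls, one ``on the $L_1$ side'' and one ``on the $L_2$ side''. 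Placing a boundary-parallel handlebody bounding $L_1$ in the first and one bounding $L_2$ in the second (keeping boundaries on $\partial B^5$) produces a smooth, properly embedded link of handlebodies $H_1\sqcup H_2\subset B^5$ with $\partial H_i=L_i$ for which $\widehat{S}$ is a \emph{splitting $4$-ball}, i.e.\ a properly embedded $4$-ball separating $H_1$ from $H_2$. Running this for $S=S_1$ and $S=S_2$ yields $H_1\sqcup H_2$ and $\thicktilde{H}_1\sqcup\thicktilde{H}_2$. The first technical task is to make this placement precise and, crucially, uniform in $j$, so that the two constructions use the ``same'' handlebodies up to isotopy rel boundary — in particular one must arrange the push-in of $S_j$ disjoint from these handlebodies, which is not automatic (a handlebody bounding the unknotted $L_1$ need not lie on one side of a given splitting sphere) and may be cleanest to carry out inside $S^4\setminus\nu(L)$ by tracking the explicit construction of $S_1,S_2$ in the proof of Theorem~\ref{thm:main}.

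Given the construction, three of the four conclusions should follow readily. The boundaries are $L_1$ and $L_2$ for both links, so they agree componentwise. Each link is smoothly boundary-parallel: pushing the handlebodies back out through a collar of $\partial B^5$ carries $H_1\sqcup H_2$, as a link, onto a split pair of handlebodies in $\partial B^5=S^4$ bounding $L$. For each fixed $i$, one wants $H_i$ and $\thicktilde{H}_i$ to be smoothly isotopic rel boundary; this is where the uniform-in-$j$ choice above pays off, together with the expectation that a single boundary-parallel handlebody in $B^5$ bounding a fixed unknotted surface is determined up to isotopy rel boundary — the rel-boundary phenomenon of \cite{handlebodypaper} in $S^4$ should not survive the extra dimension — so that the only remaining freedom is the contractible choice of how to push in. (If this uniqueness statement fails, the component-wise bullet must instead be extracted from the classification of handlebodies bounding the unknot in \cite{handlebodypaper} together with compatible choices in the construction.)

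The remaining, and hardest, point is that $H_1\sqcup H_2$ and $\thicktilde{H}_1\sqcup\thicktilde{H}_2$ are \emph{not} topologically isotopic rel boundary. The plan is to transport the obstruction from the proof of Theorem~\ref{thm:main}: that proof distinguishes the classes of $S_1$ and $S_2$ in $S^4\setminus L$ by an invariant $\tau$, and one wants to promote $\tau$ to an invariant $\widehat\tau$ of a boundary-parallel handlebody link $H_1\sqcup H_2\subset B^5$ with $\partial=L$, up to isotopy rel boundary, by evaluating $\tau$ on the boundary of a splitting $4$-ball of the link. Two things must be checked: that $\widehat\tau$ is well defined, i.e.\ independent of the splitting $4$-ball and invariant under isotopy rel boundary of the link; and that $\widehat\tau$ returns $\tau(S_j)$ on the link built from $S_j$. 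Granting this, the two links are separated because $\tau(S_1)\neq\tau(S_2)$. Concretely, the content of ``$\widehat\tau$ is well defined'' is this: a topological ambient isotopy of $B^5$ rel $\partial B^5$ taking $H_i$ to $\thicktilde{H}_i$ would carry the splitting $4$-ball $\widehat{S}$ of the first link to a splitting $4$-ball for $\thicktilde{H}_1\sqcup\thicktilde{H}_2$, and one must run a $5$-dimensional general-position and surgery argument — the analogue, one dimension up and accommodating the positive-genus intersections that arise in dimension four, of the innermost-disc manipulation giving uniqueness of splitting spheres in $S^3$ — to conclude that its boundary is, up to isotopy in $S^4\setminus L$, the splitting sphere $S_2$.

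I expect this last step — controlling how the splitting $4$-balls of our handlebody links in $B^5$ can differ, so that $\tau$ genuinely descends and no rel-$\partial B^5$ isotopy can silently turn $S_1$ into $S_2$ while fixing the handlebodies — to be the main obstacle. The uniform-in-$j$ choice of handlebodies in the construction is a smaller but necessary point to keep honest.
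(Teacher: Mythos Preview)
Your approach is genuinely different from the paper's, and the step you correctly flag as the ``main obstacle'' is a real gap that the paper does not attempt to fill. The proof of Theorem~\ref{thm:main} does not produce an abstract invariant $\tau$ of splitting spheres; rather, the specific spheres $S_1,S_2$ arise as boundaries of $4$-balls $B_1\subset h^{-1}(-\infty,0]$ and $B_2\subset h^{-1}[0,\infty)$, and the obstruction is that if $S_1$ were isotopic to $S_2$ in $S^4\setminus L$ one could glue $B_1'\cup B_2$ to split the Bing double $B(P)$. To promote this to an invariant $\widehat\tau$ of a handlebody link, you would need to know that any two splitting $4$-balls for a fixed boundary-parallel handlebody link in $B^5$ have boundaries isotopic in $S^4\setminus L$. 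That is precisely the one-dimension-up analogue of the statement Theorem~\ref{thm:main} disproves, and there is no reason to expect an innermost-surgery argument to go through here any more than it does a dimension lower; the paper explicitly discusses why such arguments fail. Your third bullet is also not free: uniqueness rel boundary of a single boundary-parallel handlebody in $B^5$ bounding a fixed unknotted surface is known for $3$-balls (Hartman) but is not established here for higher genus.

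The paper sidesteps all of this. Rather than deriving Theorem~\ref{thm:binghandlebody} from Theorem~\ref{thm:main} as a black box, it reuses the same underlying object $B(P)\subset S^5$. One handlebody link is a half $B(P)\cap W^*$ of the non-split Bing double; the other is the corresponding half of a \emph{split} link $\thicktilde P_1\sqcup\thicktilde P_2$ obtained by translating $P_2$ away from $P_1$ and then isotoping (fixing the genus-$4$ cross-section $\Sigma_1$ pointwise) so that the two links share the same equatorial cross-section. Componentwise isotopy rel boundary now comes essentially for free --- for $i=1$ by the very isotopy used in the construction, and for $i=2$ by Hartman since those pieces are $3$-balls --- and the final bullet is immediate: if both halves were isotopic rel boundary, gluing the isotopies would make $B(P)$ isotopic to the split link $\thicktilde P_1\sqcup\thicktilde P_2$, contradicting Lemma~\ref{lemma:bingnotsplit}. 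No uniqueness-of-splitting-$4$-balls statement is needed.
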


    Given that the methods of this paper were established in our previous paper \cite{handlebodypaper} and were inspired by the results of Budney--Gabai \cite{budneygabai}, it would be reasonable to hope that the techniques of \cite{budneygabai} could be adapted to prove the existence of non-isotopic splitting spheres in the complement of a link of 2-spheres in $S^4$. 

\begin{question}\label{question:budneygabai}
Can one adapt the methods of Budney--Gabai to prove that the unlink of two spheres in $S^4$ has multiple splitting spheres? 
\end{question}

One approach to Question \ref{question:budneygabai} would be to take one 3-sphere $S_1$ to be standard, and obtain other 3-spheres from $S_1$ by ambient 2-handle surgery on either side of $S_1$, analogous to the barbells of \cite{budneygabai}. However, it seems to the present authors technically difficult to extend the isotopy obstruction used by Budney--Gabai from the original setting of the homotopy class of $\pt\times B^3$ in $S^1\times B^3$ to a homotopy class of separating 3-spheres in $S^4\setminus \nu(L_{0,0})\cong S^1\times B^3\#S^1\times B^3$.

\subsection*{Organization}
In Section \ref{sec:bing} we discuss Bing doubling of codimension-2 submanifolds, particularly in ambient dimensions four and five. In Section \ref{sec:simple}, we discuss simple 3-knots, which are 3-spheres embedded in $S^5$ whose complements have cyclic fundamental group. Finally in Section \ref{sec:construct}, we make use of the constructions in the previous two sections to produce two distinct splitting spheres in the complement of a trivial surface link, proving Theorems~\ref{thm:main} and \ref{thm:binghandlebody}.
\subsection*{Acknowledgement}
The work in this paper took place while the first and third authors were visiting the second author at Sungkyunkwan University (Suwon campus), Republic of Korea during February 2023.  

\section{Bing doubles}\label{sec:bing}
In order to produce interesting codimension-2 links, we make use of a well-known satellite operation, {\emph{Bing doubling}}. Since this well-known construction in classical knot theory is less well-known in higher dimensions, we review its definition here.

\begin{definition}\label{def:bingdouble}
    Let $K^{n-2}$ be a closed, orientable, codimension-2 submanifold of $S^n$ for some $n\ge 3$. Let $\Sigma\subset K$ be a separating $(n-3)$-sphere in $K$.  The {\emph{Bing double of $K$ along $\Sigma$}}, which we refer to as $B_\Sigma(K)$, is a 2-component codimension-2 link in $S^n$  obtained as follows. 
    \begin{enumerate}
        \item Start with two parallel disjoint copies $K',K''$ of $K$. In order to obtain these parallel copies, we choose a framing of $K$. Since the normal bundle of $K$ in $S^n$ is trivial, framings are in 1-to-1 correspondence with elements of $H^1(K;\mathbb{Z})$. We specifically choose the zero framing, which has the property that the parallel copies $K',K''$ are nullhomologous in $S^n\setminus K$. (In other words, we first find an $(n+1)$-manifold $M$ in $S^n$ bounded by $K$, and we take $K',K''$ to be parallel copies of $K$ that bound disjoint, parallel copies of $M$.)
        \item Let $V$ be a copy of $B^3\times S^{n-3}$ in $S^n$ intersecting $K'\sqcup K''$ in tubular neighborhoods $P'$ and $P''$ of pushed-off copies of $\Sigma$ in each of $K'$ and $K''$ respectively.
        \item Parameterize $V=B^3\times S^{n-3}$ so that \[\left(V,V\cap (P'\sqcup P'')\right)=\left(B^3\times S^{n-3},T_\infty\times S^{n-3}\right),\] where $(B^3,T_\infty)$ is the $\infty$-tangle (see Figure \ref{fig:bingdouble_def}).
        \item Delete $V=B^{3}\times S^{n-3}$ and replace it with $(B^3,T_1)\times S^{n-3}$, where $T_1$ is the $+1$-tangle (again, see Figure \ref{fig:bingdouble_def}).
    \end{enumerate}

Let $A,B$ denote the closures of the two components of $K\setminus\Sigma$. By construction, each component of $B_{\Sigma}(K)$ bounds a copy of $A\times I$ or $B\times I$. In particular, if $K$ is a sphere then each component of $B_{\Sigma}(K)$ bounds a ball (and hence is unknotted), though these balls are not disjoint and in general we do not expect the components of $B_{\Sigma}(K)$ to form an unlink (see Lemma~\ref{lemma:bingnotsplit}).
\end{definition}
\begin{figure}
\labellist
\pinlabel{$K$} at 0 200
\pinlabel{\textcolor{red}{$\Sigma$}} at 90 198
\pinlabel{$K'$} at 210 200
\pinlabel{$K''$} at 238 177
\pinlabel{\textcolor{red}{$V$}} at 301 202
\pinlabel{$B(K)$} at 0 90
\pinlabel{\textcolor{red}{$V$}} at 90 85
\pinlabel{$B(K)$} at 220 90
\endlabellist
    \includegraphics[width=110mm]{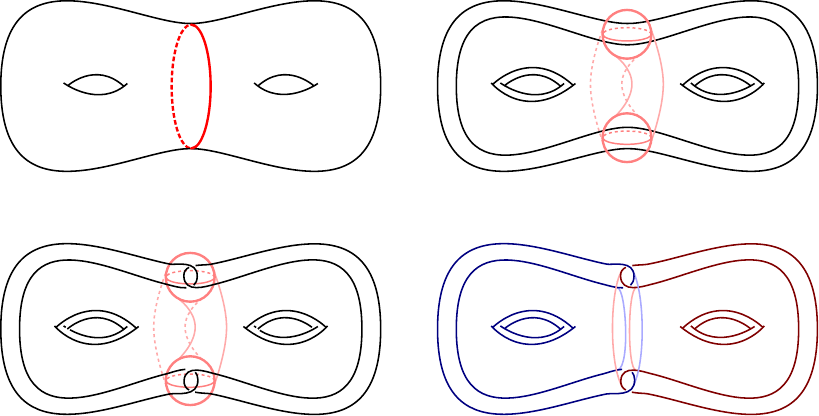}
    \caption{Top left: a schematic of a codimension-2 manifold $K^{n-2}$ of $S^n$ and a separating $(n-3)$-sphere $\Sigma$ in $K$. Top right: two parallel copies $K',K''$ of $K$ and a tubular neighborhood $V=B^3\times S^{n-3}$ of $\Sigma$ intersecting $K'\cup K''$ in $T_\infty\times S^{n-3}$ where $T_\infty$ is the $\infty$-tangle. Bottom left: we replace the intersection of $(K'\cup K'')\cap V$ with $T_1\times S^{n-3}$ where $T_1$ is the $+1$-tangle. Bottom right: the Bing double $B_\Sigma(K)$.}\label{fig:bingdouble_def}
\end{figure}

\begin{remark}
Let $P$ be a 3-sphere in $S^5$ intersecting an equatorial $S^4$ in a connected surface $S$, and let $\Sigma$ be a 2-sphere in $P$ intersecting $S$ in a circle $\gamma$. Then $B_\Sigma(P)$ intersects $S^4$ in a Bing double $B_\gamma(S)$.
\end{remark}

\begin{proposition}\label{prop:bingunknotisunlink}
    Let $S$ be a smoothly unknotted genus-$g$ surface in $S^4$. Then for any separating curve $\gamma$ in $S$, $B_\gamma(S)$ is a smooth 2-component unlink.
\end{proposition}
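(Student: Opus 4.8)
The plan is to understand the construction of $B_\gamma(S)$ very explicitly when $S$ is the standard unknotted genus-$g$ surface, and exhibit disjoint $3$-balls bounded by the two components. First I would recall that the unknotted genus-$g$ surface $S\subset S^4$ is the boundary of a standard $3$-dimensional $1$-handlebody $W\subset S^4$, and that $S$ has an explicit spine: it is obtained from a standardly embedded $2$-sphere by attaching $g$ trivial (unknotted, unlinked) $1$-handles. Since any separating curve $\gamma$ on $S$ is determined up to isotopy of $S$ by how it partitions the genus, and since the mapping class group of $S$ acts on $S$ by ambient isotopies of $S^4$ when $S$ is unknotted (this is a standard fact about unknotted surfaces — their ambient mapping class group surjects onto $\mathrm{MCG}(S)$), I may assume $\gamma$ separates $S$ into a genus-$a$ piece $A$ and a genus-$b$ piece $B$ with $a+b=g$, sitting in a standard position. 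Thus it suffices to treat this model case.

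Next I would set up the Bing double concretely following Definition~\ref{def:bingdouble}: take the $0$-framed parallel copies $S',S''$ bounding disjoint parallel handlebodies $W',W''$, pick the ball $V=B^3\times S^1$ neighborhood of $\Sigma=\gamma$, and replace the $\infty$-tangle by the $+1$-tangle inside $V$. By the final sentence of Definition~\ref{def:bingdouble}, one component of $B_\gamma(S)$ bounds a copy of $A\times I$ and the other bounds a copy of $B\times I$; since $A$ and $B$ are planar-with-handles pieces that are themselves unknotted, each of $A\times I$ and $B\times I$ can be capped off to a $3$-ball. Concretely, $A$ (a genus-$a$ surface with one boundary circle) bounds a $3$-dimensional handlebody inside $S^4$ coming from the sub-handlebody of $W$, and the component of $B_\gamma(S)$ built from $A$ is a closed genus-$a$ surface which is the double of $A$ along $\gamma$; I would show directly that it bounds a $3$-ball, e.g.\ by exhibiting it as the boundary of the obvious ball $(A\times I)\cup (\text{collar of the tangle replacement region})$, using that the $+1$-tangle replacement is supported in $V$ and is itself standard. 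The two balls are disjoint because they are built from the disjoint parallel handlebodies $W',W''$ together with disjoint portions of the (product) region $V$. Hence $B_\gamma(S)$ is a $2$-component unlink, and smoothness is immediate since every step was performed smoothly.

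The main obstacle I anticipate is the bookkeeping in the middle step: verifying that the surgery on the tangle inside $V=B^3\times S^1$ genuinely produces the doubles $A\times I$, $B\times I$ as claimed and, more importantly, that these extend to \emph{disjoint embedded $3$-balls} in $S^4$ rather than merely to immersed or overlapping ones. The subtlety is exactly the one flagged in Definition~\ref{def:bingdouble}: in general the balls bounded by the two components of a Bing double are not disjoint, so the content here is that when $S$ is unknotted the extra handles are trivial enough that the required balls \emph{can} be chosen disjointly. I would handle this by keeping everything inside an obvious $S^3$ slice (or a collar of one) for all but the handle regions: the standard $S^2$ lives in an equatorial $S^3\subset S^4$, its Bing double along a circle is the classical Bing double of the unknot in $S^3$, which is famously a genuine unlink bounding disjoint disks (after pushing one into $B^4$), and the extra trivial $1$-handles on $S$ only contribute parallel trivial tubes that extend these disks to the desired $3$-balls without introducing intersections. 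Making this "$S^3$ slice" picture precise — i.e.\ choosing the framing and the tangle-replacement region compatibly with the slicing — is where the care is needed, but no genuinely hard input is required.
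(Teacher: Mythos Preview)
Your overall strategy---reduce to a standard $\gamma$ by an ambient diffeomorphism of $(S^4,S)$, then verify the Bing double is an unlink in that model---is exactly what the paper does. But two of your supporting claims are wrong as stated.

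First, the assertion that ``the ambient mapping class group surjects onto $\mathrm{MCG}(S)$'' is false. For the unknotted genus-$g$ surface in $S^4$, Hirose showed that a mapping class of $S$ extends to a diffeomorphism of $S^4$ if and only if it preserves the Rokhlin quadratic form on $H_1(S;\mathbb{Z}/2)$; this is a proper subgroup of $\mathrm{MCG}(S)$ for $g\ge 1$. The paper therefore proves the claim you want (that any two separating curves with the same genus partition are related by an ambient diffeomorphism) by choosing symplectic bases for the two sides of each curve whose elements all have trivial Rokhlin invariant, building a surface automorphism carrying one basis to the other, and then invoking Hirose to extend it over $S^4$. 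You need this extra step; the shortcut you cite does not exist.

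Second, your description of the bounding $3$-manifolds is incorrect. If $\gamma$ cuts $S$ into pieces $A$ and $B$ of genera $a$ and $b$, then the component of $B_\gamma(S)$ built from $A$ is the double of $A$ along $\partial A$, which is a closed surface of genus $2a$ (not $a$), and the $3$-manifold $A\times I$ it bounds is a genus-$2a$ handlebody, not a $3$-ball. A positive-genus closed surface cannot bound a $3$-ball. The target statement is that the two components bound \emph{disjoint handlebodies}, which is the definition of an unlink of surfaces in $S^4$; your repeated references to ``$3$-balls'' and ``capping off $A\times I$ to a $3$-ball'' suggest a genuine confusion here rather than a slip. The paper sidesteps any direct handlebody construction by drawing a banded unlink diagram for $B_{\gamma'}(S)$ in the standardized position and reading off that it is an unlink; your proposed ``$S^3$-slice plus trivial tubes'' picture could in principle be made to work, but only after you correct what objects you are actually looking for.
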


Note that it is obvious from Definition \ref{def:bingdouble} that each component of $B_{\gamma}(S)$ is unknotted for any surface $S$ in $S^4$, as $\gamma$ cuts $S$ into two 2-dimensional 1-handlebodies and hence each component of $B_{\gamma}(S)$ bounds a 3-dimensional 1-handlebody.

\begin{proof}[Proof of Proposition~\ref{prop:bingunknotisunlink}]
    Let $F_1, F_2$ be the closures of the two components of $S\setminus\gamma$. Let $\gamma'$ be any other separating curve on $S$ so that the closures $F'_1,F'_2$ of the components of $S\setminus\gamma'$ satisfy $g(F'_i)=g(F_i)$ for each $i$. 
    
    \begin{claim}There is a self-diffeomorphism $\phi$ of $S^4$ fixing $S$ setwise but with $\phi(\gamma)=\gamma'$.
    \end{claim}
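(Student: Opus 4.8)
The plan is to prove the Claim by exhibiting an ambient diffeomorphism of $S^4$ realizing the ``change of separating curve'' on the unknotted surface $S$. Since $S$ is the standard genus-$g$ unknotted surface, it bounds a standard 3-dimensional handlebody $W\subset S^4$ (a regular neighborhood of a planar spine), and $S^4\setminus\nu(W)$ is also a standard handlebody. The key point is that the mapping class group of $S$ that extends over $S^4$ (i.e., is realized by self-diffeomorphisms of $(S^4,S)$) acts transitively on isotopy classes of separating curves with a fixed genus splitting. Concretely, I would proceed as follows.

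First I would recall the standard model: realize $S$ as the boundary of a handlebody $W$ obtained by tubing together $g+1$ standardly embedded 3-balls in a row inside $S^4$, equivalently as the double along the boundary of a disk-with-$g$-holes pushed into $B^4$. In this model there is an obvious ``linear'' family of separating curves $\gamma_0,\gamma_1,\dots,\gamma_g$ on $S$, where $\gamma_k$ separates $S$ into pieces of genus $k$ and $g-k$ and is visibly symmetric. Second, I would show that the subgroup of the mapping class group $\mathrm{MCG}(S)$ that extends to self-diffeomorphisms of $S^4$ (preserving $S$ setwise) contains enough elements — in particular all Dehn twists along curves bounding in $W$ and all ``handle slides'' permuting the handles of $W$ — to move any separating curve $\gamma$ with $g(F_i)=g(F_i')$ to the model curve $\gamma_{g(F_1)}$, and hence to $\gamma'$. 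The cleanest route is: (a) isotope $\gamma$ within $S$ (ambiently, extending the isotopy of $S$ to an ambient isotopy of $S^4$ since $S$ is locally flat / smooth) so that it becomes one of the standard curves $\gamma_k$; this uses the classification of separating simple closed curves on a surface up to the action of the handlebody mapping class group $\mathrm{MCG}(W)$, together with the fact that $\mathrm{MCG}(W)$ is realized by diffeomorphisms of $S^4$ because $W$ is standardly embedded; (b) do the same for $\gamma'$, landing on $\gamma_{g(F_1')}=\gamma_{g(F_1)}$; (c) compose.

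An alternative, perhaps slicker, argument avoids explicit generators: both $(S^4, S, \gamma)$ and $(S^4, S, \gamma')$ are obtained from $(S^4,S)$ by specifying a separating curve cutting $S$ into handlebodies-of-surfaces of the prescribed genera, and since the unknotted $S$ is determined up to isotopy together with its standard bicollar and its standard complementary handlebodies, the pair $(S^4,S)$ with a marked separating curve of type $(g(F_1),g(F_2))$ is unique up to diffeomorphism. Formally: a neighborhood of $\gamma$ in $S^4$ is a standard $I\times(\text{annulus})$, cutting $(S^4,S)$ along the corresponding 3-ball $B^3\times\gamma$-neighborhood yields two pieces each diffeomorphic as a pair to (4-ball, genus-$k$ unknotted surface with a boundary bicollar), and these pieces are glued along a standard $S^3\times I$ containing $S^1\times I$; the same description holds for $\gamma'$ with the same genera, so one builds $\phi$ piece-by-piece and glues. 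The main obstacle I anticipate is step (a)/(b): one must genuinely use that $W$ is unknotted (standardly embedded) in $S^4$ so that the full handlebody mapping class group — not merely the subgroup extending over the abstract handlebody — is realized by ambient diffeomorphisms of $S^4$, and one must invoke that separating curves of a fixed genus-splitting type on $\partial W$ form a single orbit under $\mathrm{MCG}(W)$ (a standard fact, e.g.\ via the disk complex of the handlebody). Care is also needed to make the isotopies and diffeomorphisms smooth and to ensure the bicollars/framings match up, but these are routine once the standard model is fixed.

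Once the Claim is established, the proof of Proposition~\ref{prop:bingunknotisunlink} concludes quickly: choosing $\gamma'$ to be a ``geometrically split'' separating curve (one bounding a disk in $S^4\setminus S$ on which the two halves $F_1',F_2'$ lie in disjoint balls) makes $B_{\gamma'}(S)$ manifestly a split union of unknots, i.e.\ a 2-component unlink; applying $\phi$ then carries $B_{\gamma'}(S)$ to $B_\gamma(S)$, so $B_\gamma(S)$ is also a smooth 2-component unlink.
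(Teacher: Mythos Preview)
Your approach has a genuine gap. The assertion that ``separating curves of a fixed genus-splitting type on $\partial W$ form a single orbit under $\mathrm{MCG}(W)$'' is false for $g\ge 2$. A clean invariant distinguishing orbits is whether the curve bounds a disk in $W$, equivalently whether it dies in $\pi_1(W)\cong F_g$. Concretely, in genus $2$ with meridians $a_1,a_2$ and longitudes $b_1,b_2$ (so $\pi_1(W)=\langle b_1,b_2\rangle$), the standard waist curve $c_0=\partial\nu(a_1\cup b_1)$ maps to $[1,b_1]=1$ in $F_2$ and bounds a disk, whereas the separating curve $c=\partial\nu(b_1\cup\beta)$, with $\beta$ a simple curve homotopic to $a_1b_2$ meeting $b_1$ once, maps to $[b_1,b_2]\neq 1$ and does not. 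Thus $c_0$ and $c$ lie in different $\mathrm{MCG}(W)$-orbits, and your step (a) cannot be carried out using only the handlebody group. Your alternative (b) also breaks down: a tubular neighborhood of $\gamma$ in $S^4$ is $S^1\times B^3$, and removing it (or any $3$-ball) does not disconnect $S^4$, so there is no decomposition into ``two pieces each diffeomorphic to a $(B^4,\text{surface})$ pair'' to assemble.

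The paper's proof avoids this by appealing directly to Hirose's theorem, which identifies the subgroup of $\mathrm{MCG}(S)$ extending over $S^4$ as the stabilizer of the Rokhlin quadratic form $q$ on $H_1(S;\mathbb{Z}/2)$, a group strictly larger than (and not characterized by) the handlebody group. One chooses symplectic bases $(\alpha_i,\beta_i)$ and $(\alpha'_i,\beta'_i)$ adapted to the two splittings with $q$ vanishing on every basis curve; any surface automorphism carrying one such basis to the other then automatically preserves $q$ (two quadratic refinements of the same intersection form agreeing on a symplectic basis are equal) and hence extends over $S^4$. The missing ingredient in your argument is precisely this: you need more of $\mathrm{MCG}(S)$ than the handlebody group supplies, and Hirose's theorem is what tells you how much more is available.
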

    \begin{proof}[Proof of Claim]
    Let $(\alpha_1,\beta_1)$ be a symplectic basis of $H_1(F_1;\mathbb{Z})$ and $(\alpha_2,\beta_2)$ be a symplectic bases of $H_1(F_2;\mathbb{Z})$. Take all $\alpha_i,\beta_i$ curves to have trivial Rokhlin invariant (as in \cite{hirose}). Similarly, choose vanishing-Rokhlin symplectic bases $(\alpha'_1,\alpha'_2),(\beta'_1,\beta'_2)$ for $H_1(F'_1;\mathbb{Z})$ and $H_1(F'_2;\mathbb{Z})$ respectively. Let $\psi:S\to S$ be a surface automorphism with $\psi(\alpha_i)=\alpha'_i$ and $\psi(\beta_i)=\beta'_i$ for each $i$. Then up to isotopy, $\psi(\gamma)=\gamma'$. By Hirose \cite{hirose}, the map $\psi$ extends as a diffeomorphism over $S^4$.
    \end{proof}

In particular, there is a diffeomorphism $\phi:S^4\to S^4$ taking $(S,\gamma)$ to the pair $(S,\gamma')$ illustrated in Figure \ref{fig:bingunknot}. In Figure \ref{fig:bingunknot} we construct $B_{\gamma'}(S)$ and explicitly observe that this 2-component link is an unlink. Since $(S^4,B_{\gamma'}(S))\cong(S^4,B_\gamma(S))$, we conclude that $B_{\gamma}(S)$ is also an unlink.
\end{proof}

\begin{figure}
\labellist
\pinlabel{$S$} at 10 80
\pinlabel{\textcolor{red}{$\gamma$}} at 50 105
\pinlabel{$S$} at 180 80
\pinlabel{\textcolor{red}{$\gamma'$}} at 222 110
\pinlabel{$S$} at 10 50
\pinlabel{\textcolor{red}{$\gamma'$}} at 25 55
\pinlabel{$B_{\gamma'}(S)$} at 100 60
\pinlabel{iso} at 151 36
\pinlabel{iso} at 227 36
\endlabellist
    \includegraphics[width=130mm]{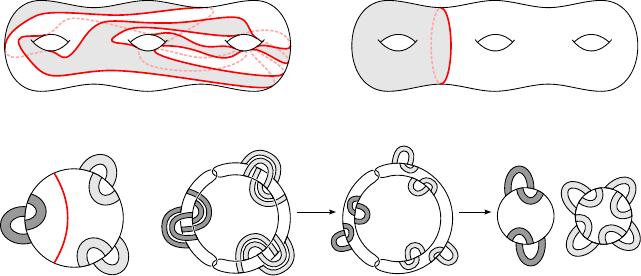}
    \caption{Top row: For $S$ an unknotted surface in $S^4$, there is a self-diffeomorphism $\phi$ of  $S^4$ fixing $S$ setwise and taking a separating curve $\gamma$ to a standard separating curve $\gamma'$. Taking $S$ to be contained in an equatorial 3-sphere, $\gamma'$ is the intersection of $S$ with an equatorial $S^2$ of the 3-sphere. We illustrate here the case that $\gamma$ cuts $S$ into a genus-1 and a genus-2 piece. Bottom row: we draw a banded unlink diagram of the Bing double $B_{\gamma'}(S)$ and observe that $B_{\gamma'}(S)$ is an unlink.}\label{fig:bingunknot}
\end{figure}

When $K^{n-2}$ is a codimension-2 sphere, then we can omit the separating sphere $\Sigma$ from the notation $B_\Sigma(K)$, since any two codimension-1 subspheres of $K$ are isotopic in $K$ (remember that we are working in the topological locally-flat category). In this setting, it is also a simple exercise to check that $B(K)$ is well-defined up to isotopy in a neighborhood of $K$.

\begin{remark}\label{rem:fourcopies}
    Let $K^{n-2}$ be a codimension-2 sphere in $S^n$ for $n\ge 3$. Then the Bing double of $K$ can also be obtained by the following procedure.

Let $K_1,K_2,K_3,K_4$ be disjoint parallel copies of $K$. (If $n=3$ then ensure these copies are parallel with the 0-framing; if $n>3$ then $H^1(K;\mathbb{Z})=0$ so there is no ambiguity.) Give $K_1,K_2$ parallel orientations and $K_3,K_4$ the opposite orientation. Surger $K_1\sqcup K_2\sqcup K_3\sqcup K_4$ along two arcs as specified in Figure \ref{fig:bingfourcopies}, taking framing on the arcs that are compatible with the relative orientations of $K_1,K_3$ and $K_2,K_4$.
\end{remark}

\begin{figure}
\labellist
\pinlabel{$K$} at 6 29
\pinlabel{\textcolor{red}{framed arcs}} at 79 63
\pinlabel{$K$} at 58.5 29
\pinlabel{$K$} at 125.5 29
\pinlabel{$K$} at 191 29
\endlabellist
\vspace{.1in}
    \includegraphics[width=130mm]{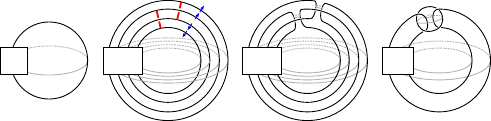}
    \caption{First: A schematic of a codimension-2 sphere $K$ in $S^n$. Second: Four parallel copies of $K$ with indicated orientations, and two framed arcs connecting different pairs of copies of $K$. Third: We surger the four parallel copies of $K$ along the two arcs to obtain a link of two spheres, each of which is unknotted. Fourth: The link is isotopic to the Bing double $B(K)$.}\label{fig:bingfourcopies}
\end{figure}

Framings of an arc in dimension $n\ge 4$ are in bijection with $\pi_1(SO(n-1))=\mathbb{Z}/2\mathbb{Z}$; requiring that the surgeries respect the orientations on $K_1,K_2,K_3,K_4$ determines the framings on the two arcs, and thus the Bing double $B(K)$ is well-defined up to isotopy when $K^{n-2}$ is a codimension-2 sphere with $n\ge 4$.

Additionally, it is useful to note that $B(K)$ can be obtained by a certain satellite operation.

\begin{remark}\label{rem:satellite}
Let $K^{n-2}$ be a codimension-2 sphere in $S^n$ for $n\ge 3$. Then the Bing double of $K$ can be obtained by the following satellite operation (see Figure \ref{fig:bing_satellite}).

Let $U_1\sqcup U_2$ be a 2-component unlink of $(n-2)$-spheres in $S^n$. Then $\pi_1(S^n\setminus (U_1\sqcup U_2))$ is a free group generated by meridians $\mu_1$, and $\mu_2$ for $U_1,U_2$ respectively. Let $C$ be a curve in the complement of $U_1\sqcup U_2$ freely representing $\mu_1\mu_2\mu_1^{-1}\mu_2^{-1}$.

Now set $(X,B)=(S^{n}\setminus \nu(C), U_1\sqcup U_2)$. The Bing double $B(K)$ of $K$ is then obtained by \[(S^n,B(K))=\left((S^n\setminus\nu(K)) \cup X, B\right).\]

If $n=3$, then the union above must be chosen so that a meridian of $C$ is glued to a 0-framed longitude of $K$. If $n\ge 4$ the resulting link does not depend on the gluing map  (\cite[Corollary 9.3]{stallings} for $n\ge 5$, \cite{laudenbachpoenaru} for $n=4$).
\end{remark}

\begin{figure}
    \centering
    \labellist
    \pinlabel{$U_1$} at 10 37
    \pinlabel{$U_2$} at 40 37
    \pinlabel{\textcolor{red}{$C$}} at 25 35
    \pinlabel{\textcolor{red}{$S^n\setminus\nu(C)$}} at 85 50
    \pinlabel{$B$} at 69 12
    \endlabellist
    \vspace{.1in}
    \includegraphics[width=100mm]{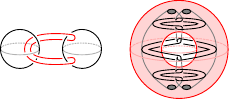}
    \caption{When $K^{n-2}$ is a codimension-2 sphere in $S^n$, the Bing double of $K$ is obtained by the illustrated satellite operation. Left: $U_1,U_2$ form a codimension-2 unlink in $S^n$ and $C$ is a curve representing $\mu_1\mu_2\mu_1^{-1}\mu_2^{-1}$ in the complement of $U_1\sqcup U_2$, where $\mu_i$ is a meridian of $U_i$. Right: the pattern $(X,B)=(S^n\setminus\nu(C),U_1\sqcup U_2)$.}
    \label{fig:bing_satellite}
\end{figure}

Both Remarks \ref{rem:fourcopies} and \ref{rem:satellite} contain standard phrasings of the definition of the Bing double of a classical knot in $S^3$, extended to hold in general dimension.

\section{Simple 3-knots}\label{sec:simple}

In this section, we study (smooth/topological) {\emph{3-knots}}, which are (smooth/locally-flat) submanifolds of $S^5$ that are homeomorphic to $S^3$. We say that a 3-knot $P$ is {\emph{simple}} if $\pi_1(S^5\setminus\nu(P))\cong\mathbb{Z}$. A 3-knot is {\emph{unknotted}} if it bounds a 4-ball in $S^5$ (with no mention of category necessary, as any smooth 3-knot that bounds a locally flat 4-ball also bounds a smooth 4-ball \cite[Theorem 1.1]{shaneson}).

While the unknotted 3-knot is simple, there are also many nontrivial simple 3-knots. Consider the following lemma, which appears in \cite{handlebodypaper} and is mostly a consequence of Hirose's work on extending surface automorphisms from an unknotted surface to all of $S^4$ \cite{hirose}.

\begin{lemma}\label{simplelemma}
    Every smooth 2-knot $K$ arises as the cross-section of a smooth simple 3-knot. If $K$ can be turned into a smoothly unknotted genus-$g$ surface via surgery along $g$ arcs, then the height function $h$ on $S^5$ can be taken to restrict to this simple 3-knot as a Morse function with one index-0 point, $2g$ index-1 points, $2g$ index-2 points, and one index-3 point.
\end{lemma}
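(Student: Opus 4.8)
The plan is to build the simple 3-knot by spinning-type or suspension-type constructions that realize $K$ as a cross-section, and then to control the fundamental group and the Morse-theoretic complexity separately. First I would recall that since $K$ is a smooth 2-knot in $S^4$, one can place $K$ inside an equatorial $S^4\subset S^5$; the goal is to find a 3-knot $P\subset S^5$ meeting this $S^4$ transversely in $K$. A natural candidate is obtained by taking a Morse function $f$ on $S^4$ adapted to a movie presentation of $K$ (i.e. a sequence of slices, births, deaths, saddles) and "thickening" it: the key structural input is that $K$ bounds, on each side of the $S^4$, a manifold built from the movie, and by choosing a symmetric movie one gets $P$ presented by a height function $h$ on $S^5$ whose critical points are exactly the critical points of the movie. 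The hypothesis that $K$ becomes the unknotted genus-$g$ surface after surgery along $g$ arcs is what lets us choose the movie so that $K$ dies (as we increase height) with exactly one maximum: run the $g$ surgeries to unknot, then cap off the unknotted genus-$g$ surface using its standard handlebody, giving $g$ index-2 saddles and one index-3 max above the middle level, and symmetrically one index-0 min and $g$ index-1 saddles below. This yields the stated count: one $0$-handle, $2g$ $1$-handles, $2g$ $2$-handles, one $3$-handle.

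The second half of the argument is showing $\pi_1(S^5\setminus\nu(P))\cong\mathbb{Z}$. Here I would invoke exactly the mechanism referenced in the statement: Hirose's theorem that any automorphism of an unknotted surface in $S^4$ (respecting Rokhlin data) extends over $S^4$. Concretely, the complement of $P$ is built by gluing together the complements of the slices, and the fundamental group is generated by a single meridian of $P$ together with loops coming from the surgery arcs and handlebody structure. By arranging (using Hirose, as in the proof of Proposition~\ref{prop:bingunknotisunlink}) that the unknotting arcs and the capping handlebody are standard, one can Van Kampen one's way up the height function and check that each handle attachment either introduces a generator immediately killed by the next handle or imposes the relation identifying it with the meridian; the net effect is that $\pi_1$ is normally generated by the meridian and all other generators are trivial, so $\pi_1\cong\mathbb{Z}$. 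This is essentially the content cited from \cite{handlebodypaper}, so in the write-up I would state it as a reduction to that paper's argument together with Hirose's extension theorem, rather than redoing the Van Kampen bookkeeping.

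The main obstacle I anticipate is the fundamental group computation: transversality and Morse theory make the existence of \emph{some} 3-knot cross-sectioning $K$ with a controlled Morse function essentially formal, but ensuring $\pi_1$ of the complement collapses to $\mathbb{Z}$ is delicate, because a generic spun or suspended construction produces a complement with the $\pi_1$ of $K$'s complement (or worse), not $\mathbb{Z}$. The resolution hinges entirely on choosing the handles to be \emph{unknotted/standard}, which is where Hirose's theorem does the real work — it guarantees that after an ambient diffeomorphism of $S^4$ the surgery arcs and the genus-$g$ handlebody sit in the simplest possible position, so that the extra generators are killed. A secondary technical point is the framing/Rokhlin-invariant bookkeeping needed to apply Hirose (exactly as in the Claim inside the proof of Proposition~\ref{prop:bingunknotisunlink}), and verifying that the symmetric doubling of the movie produces a smoothly (not merely locally flatly) embedded $S^3$ — but this follows from Shaneson's theorem quoted in the definition of unknottedness, since the 3-knot we build is assembled from smooth pieces.
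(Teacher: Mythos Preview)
Your construction is the same as the paper's (sketched inside the proof of Proposition~\ref{prop:bingmorse} and its footnote): put $K$ at the middle level, attach $g$ one-handles along the given arcs to reach the unknotted $\Sigma_g$, cap with a genus-$g$ handlebody $V$, and mirror below. But you have the roles of the two key ingredients reversed, and this creates a real gap. Hirose's theorem is needed not for the $\pi_1$ computation but to guarantee that $P$ is a $3$-sphere at all. The upper half $P_+$ is $(S^2\times I)\cup(g\text{ one-handles})\cup V$, which abstractly is a closed genus-$g$ Heegaard splitting with a ball removed; there is no a priori reason that Heegaard splitting gives $S^3$. The paper invokes Hirose to choose $V$ so that its compressing disks are geometrically dual to the cores of the $g$ one-handles, whence the $1$- and $2$-handles cancel and $P_+\cong B^3$, so $P\cong S^3$. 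Your appeal to Shaneson here is misplaced: that result compares smooth and locally flat $4$-balls bounded by a given smooth $S^3$; it says nothing about whether your $3$-manifold is $S^3$.

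Conversely, the simplicity of $P$ is much easier than your level-by-level Van Kampen sketch suggests, and does not require any standardization. Because $h|_P$ has a \emph{unique local minimum}, every loop in $S^5\setminus\nu(P)$ can be pushed below all other critical values, where the complement is homotopy equivalent to a circle; thus $\pi_1(S^5\setminus\nu(P))$ is generated by a single meridian, hence cyclic, hence $\mathbb{Z}$ by Alexander duality. (Your critical-point count is also off: above the middle level there are $g$ index-$1$ points from the arcs \emph{and} $g$ index-$2$ points plus one index-$3$ point from $V$, with the mirror contribution below; that is how one reaches $2g$ of each, not the $g$ of each you describe before asserting the total.)
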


Lemma~\ref{simplelemma} is particularly interesting in the case that $K$ is smooth and not doubly slice (e.g., when $K$ is the $2$-twist spun trefoil \cite{stoltzfus}), as then we automatically know that $K$ is the cross-section of some nontrivial simple 3-knot.

The existence of nontrivial simple 3-knots makes the following lemma surprisingly nontrivial to prove.

\begin{lemma}\label{lemma:bingnotsplit}
    Let $P$ be a nontrivial 3-knot. Then the Bing double $B(P)$ of $P$ is not split.
\end{lemma}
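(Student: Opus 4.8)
The plan is to study the fundamental group of the complement $S^5 \setminus \nu(B(P))$ and show it cannot be the fundamental group of a split link, which (for a codimension-2 link of $3$-spheres in $S^5$) would have to be a free product of two copies of $\mathbb{Z}$. I would start from the satellite description in Remark~\ref{rem:satellite}: writing $U_1 \sqcup U_2$ for the unlink of two $3$-spheres and $C$ for a curve representing the commutator $\mu_1\mu_2\mu_1^{-1}\mu_2^{-1}$, we have $(S^5, B(P)) = \big((S^5 \setminus \nu(P)) \cup_\partial X, B\big)$ where $(X,B) = (S^5 \setminus \nu(C), U_1 \sqcup U_2)$. A van Kampen computation then gives $\pi_1(S^5 \setminus \nu(B(P)))$ as an amalgamated product of $\pi_1(S^5 \setminus \nu(P))$ with $\pi_1(X \setminus B)$ over the torus $\partial \nu(P) \cong S^1 \times S^2$ — but since $P$ is a $3$-knot, $\pi_1$ of its boundary torus is just $\mathbb{Z}$, generated by a meridian $\mu_P$, and the longitude is nullhomotopic in $\nu(P)$. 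So $\pi_1(S^5 \setminus \nu(B(P)))$ is obtained from $\pi_1(X \setminus B) = F(\mu_1, \mu_2) * \mathbb{Z}\langle \mu_P \rangle / (\text{relations making }\mu_P\text{ freely homotopic to the commutator curve }C)$ by killing $\mu_P$ — wait, more carefully: gluing in $S^5\setminus\nu(P)$ along the torus identifies the meridian of $C$ (a generator of one $\mathbb{Z}$ factor from the satellite pattern's complement) with a longitude of $P$. Since $P$ is nontrivial, $G := \pi_1(S^5\setminus\nu(P))$ is a nontrivial group with $H_1 = \mathbb{Z}$; the upshot of the van Kampen argument is that $\pi_1(S^5\setminus\nu(B(P)))$ surjects onto $G / \langle\langle \ell_P \rangle\rangle$ where $\ell_P$ is the longitude, i.e. onto $\pi_1$ of the $5$-manifold obtained by $0$-surgery... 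I would instead follow the cleaner route: $\pi_1(S^5 \setminus \nu(B(P)))$ contains $G/\langle\langle \text{something}\rangle\rangle$ as the image under the map collapsing the satellite piece, and I want to leverage nontriviality of $P$.

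The key technical step is to show that the curve $C$ representing the commutator $[\mu_1,\mu_2]$ is \emph{not} nullhomotopic in $S^5 \setminus \nu(U_1 \sqcup U_2)$ (clear, since it is a nontrivial element of a free group) and more importantly to track what happens to the free group $F(\mu_1,\mu_2)$ after the satellite: the meridians $\mu_1, \mu_2$ of $B(P)$ remain as generators, and the new relations come from the fact that we have glued in $S^5\setminus\nu(P)$ along a torus whose meridian $\mu_P$ is identified with (a conjugate of) $[\mu_1,\mu_2]$ and whose longitude $\ell_P$ dies. Thus $\pi_1(S^5 \setminus \nu(B(P))) \cong \big(F(\mu_1,\mu_2) *_{\mu_P = [\mu_1,\mu_2]} G\big)/\langle\langle \ell_P\rangle\rangle$. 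If $B(P)$ were split, this group would be $F(\mu_1, \mu_2)$ itself (free of rank $2$), since the complement of a $2$-component unlink of $3$-spheres in $S^5$ has free $\pi_1$ of rank $2$ and splitness forces this. So I need to derive a contradiction: from a free group of rank $2$ one cannot have this amalgamated structure with $G$ nontrivial. Concretely, I would use that $G$ injects (or at least maps nontrivially) into the amalgam over $\mathbb{Z}\langle\mu_P\rangle$, and that the abelianization and/or a Stallings-type argument (or the structure theory of amalgamated products, e.g. the fact that in $A *_C B$ with $C$ infinite cyclic, both factors inject) shows the resulting group is not free of rank $2$ unless $G$ is trivial — e.g. because quotienting by $\ell_P$ only adds one relator to $G$, and a one-relator quotient of a nontrivial knot-type group $G$ is still nontrivial (indeed the quotient $G/\langle\langle\ell_P\rangle\rangle$ is $\pi_1$ of the closed $5$-manifold from surgery, which has the same $\pi_1$ presentation as closing up the knot; for a nontrivial simple $3$-knot $\pi_1 = \mathbb{Z}$ already and $\ell_P$ is already trivial, so the quotient is still $\mathbb{Z}\ne 1$ and one must work with $\mu_P \mapsto [\mu_1,\mu_2]$).

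Let me restate the endgame I actually expect to work: after the satellite, there is a retraction $S^5 \setminus \nu(B(P)) \to$ (a space with $\pi_1 = G$ amalgamated along the meridian), and one shows $\pi_1(S^5\setminus\nu(B(P)))$ admits an epimorphism onto a nontrivial quotient $Q$ of $G$ that is \emph{not} a quotient of the rank-$2$ free group $F(\mu_1,\mu_2)$ modulo the normal closure of a commutator — and here I'd use the first homology or the maximal metabelian quotient, noting $H_1(S^5\setminus\nu(B(P))) = \mathbb{Z}^2$ while the presence of nontrivial $G$ contributes to $H_2$ or to the second derived quotient in a way that the unlink complement cannot match. \textbf{The main obstacle} is precisely this last algebraic step: ruling out that the amalgamated/HNN structure collapses to something free — one must genuinely use that $P$ is \emph{nontrivial}, and the natural invariant to detect this (for $3$-knots of $S^5$) is not $\pi_1$ alone but something like the Alexander module / higher homotopy, or an Euler-characteristic/homology count for the complement, so I would expect to invoke an algebraic-topological computation of $H_*(S^5 \setminus \nu(B(P)))$ (via Mayer–Vietoris on the satellite decomposition) together with the observation that for the unlink the complement is homotopy equivalent to $S^1 \vee S^1 \vee S^2 \vee S^2 \vee S^4 \vee S^4$ (or the appropriate standard model), and show the homology of $S^5\setminus\nu(B(P))$ differs — or, if homology cannot see it, fall back on $\pi_1$ and the structure theory of groups acting on trees (Bass–Serre theory) to conclude that a nontrivial amalgam $G *_{\mathbb{Z}} F$ with the commutator identification is never free of rank $2$.
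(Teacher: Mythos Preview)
Your satellite/van Kampen approach is the same starting point as the paper's, and it does handle the case where $\pi_1(S^5\setminus\nu(P))\not\cong\mathbb{Z}$: the pushout
\[
\langle a,b\rangle \ast_{\mathbb{Z}} \pi_1(S^5\setminus\nu(P)),
\qquad 1\mapsto [a,b],\quad 1\mapsto \mu_P,
\]
is not free when the right-hand factor is strictly larger than $\mathbb{Z}$. The paper draws exactly this conclusion in one line.

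The genuine gap is the simple case, which is precisely the case needed for the application (the $3$-knot $P$ used in Theorem~\ref{thm:main} is simple by construction). When $\pi_1(S^5\setminus\nu(P))\cong\mathbb{Z}$, the amalgam above collapses to $\langle a,b\rangle$, so $\pi_1(S^5\setminus\nu(B(P)))$ \emph{is} free of rank $2$. Your Bass--Serre suggestion therefore cannot work here: there is nothing for it to detect. Homology also fails, since by Alexander duality $H_*(S^5\setminus\nu(B(P)))$ depends only on the homeomorphism type of $B(P)$, which is always two $3$-spheres. (Incidentally, the unlink complement has the homotopy type of $S^1\vee S^1\vee S^4$, not $S^1\vee S^1\vee S^2\vee S^2\vee S^4\vee S^4$.)

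The paper closes this gap by passing to $\pi_2$. First, a theorem of Kreck--Su says that a $3$-knot $Q\subset S^5$ with $\pi_1(S^5\setminus Q)\cong\mathbb{Z}$ and $\pi_2(S^5\setminus Q)=0$ must be unknotted; since $P$ is simple and nontrivial, $\pi_2(S^5\setminus\nu(P))\neq 0$. Second, Althoen's $\pi_2$-analogue of van Kampen (applicable because the $\pi_1$ maps from $S^1\times S^3$ into both pieces are injective) yields a pushout showing that $\pi_2(S^5\setminus\nu(P))$ injects into $\pi_2(S^5\setminus\nu(B(P)))$. Hence $\pi_2$ of the Bing double complement is nonzero, whereas for the unlink it vanishes. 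This is the missing idea: you correctly sensed that ``higher homotopy'' is needed, but the specific inputs---Kreck--Su for nontriviality of $\pi_2$ and Althoen for propagating it through the satellite decomposition---are what make the argument go through.
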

\begin{proof}

The components of $B(P)$ are each unknotted, so if $B(P)$ is split then $B(P)$ is an unlink. Thus if $B(P)$ is split, $S^5\setminus\nu(B(P))\simeq S^1\vee S^1\vee S^4$. In particular, if $B(P)$ is split then $\pi_1(S^5\setminus \nu(B(P)))$ is free and $\pi_2(S^5\setminus \nu(B(P)))$ is trivial. 

As observed in Remark \ref{rem:satellite},
the complement $S^5\setminus \nu(B(P))$ decomposes as a union of a manifold $X_1\cong S^5\setminus\nu(P)$ and 
 a manifold $X_2$ homeomorphic to the complement in $S^5$ of a generalized Borromean rings of two 3-spheres and a circle. The manifolds $X_1$ and $X_2$ are glued along an $S^1\times S^3$ boundary component (which in $X_2$ is the boundary of the deleted tubular neighborhood of a circle). Let $U_1, U_2$ be small tubular open neighborhoods of $X_1, X_2$ respectively. The Seifert--van Kampen theorem applied to $\{U_1,U_2\}$ yields the following pushout.
$$\begin{tikzpicture}[node distance=1cm, auto]
\node (O) {$\mathbb{Z}$};
  \node (A) [right=.2cm of O] {$\pi_1(S^1\times S^3,\ast)$};
  \node (B) [right=1cm of A] {$\pi_1(S^5\setminus\nu(P))$};
  \node (M) [right=.2cm of B] {$\langle\langle\mu\rangle\rangle$};
  \node (C) [below=1.5cm of A] {$\pi_1(X_2)$};
  \node (X) [left=.2cm of C] {$\langle a,b\rangle$};
  \node (D) [below=1.5cm of B] {$\pi_1(S^5\setminus\nu(B(P)))$};
   \draw[->] (A) to node [above] {$1\mapsto\mu$} (B);
   \draw[->] (A) to node [left] {\rotatebox{-90}{\rotatebox{90}{$1$} $\mapsto$ \rotatebox{90}{\hspace{-.2in}$ aba^{-1}b^{-1}$}}} (C);
   \draw[->] (B) to node [right] {} (D);
   \draw[->] (C) to node [above] {} (D);
   \path ([shift={(0,-.2)}]O.east) to node {{\scalebox{1}[1]{$=$}}} ([shift={(0,-.2)}]A.west);
   \path ([shift={(0,-.2)}]B.east) to node {{\scalebox{1}[1]{$=$}}} ([shift={(0,-.2)}]M.west);
   \path ([shift={(0,-.2)}]X.east) to node {{\scalebox{1}[1]{$=$}}} ([shift={(0,-.2)}]C.west);
\end{tikzpicture}$$
If $\pi_1(S^5\setminus\nu(P))\not\cong\mathbb{Z}$, then $\pi_1(S^5\setminus\nu(B(P))$ is not free and we conclude that $B(P)$ is not split.

We now restrict to the case that $\pi_1(S^5\setminus\nu(P))\cong\mathbb{Z}$, i.e., $P$ is simple. We will show that $\pi_2(S^5\setminus B(P))\not\cong 0$.

By Kreck--Su \cite{kreck}, if a 3-knot $Q\subset S^5$ is such that $\pi_1(S^5\setminus Q)\cong\mathbb{Z}$ and $\pi_2(S^5\setminus Q)=0$, then $S^5\setminus\nu(Q)$ is homeomorphic to $S^1\times B^4$, so by e.g. \cite[Corollary 9.3]{stallings} the 3-knot $Q$ bounds a ball in $S^5$. 
Since $\pi_1(S^5\setminus \nu(P))\cong\mathbb{Z}$ and $P$ does {\emph{not}} bound a ball, we conclude that $\pi_2(S^5\setminus \nu(P))\neq 0$.

We will make use of 
Althoen's analogue of the Seifert van Kampen theorem for $\pi_2$ {\cite{althoen}}. This theorem applies to an open cover $\{V_1, V_2\}$ of a path-connected topological space when $V_1, V_2$, and $V_1\cap V_2$ are all path-connected and the maps induced by inclusion from $\pi_1(V_1\cap V_2)\to \pi_1(V_i)$ are injective for both $i=1,2$. We have already seen that this is the case for the open cover $\{U_1, U_2\}$ of $S^5\setminus\nu(B(P))$.

We thus obtain the following pushout diagram, with $\ast$ a basepoint in $U_3:=U_1\cap U_2\simeq S^1\times S^3$ and indexing sets $I_j=\pi_1(S^5\setminus\nu(B(P)),\ast)/\pi_1(U_j,\ast)$.

$$\begin{tikzpicture}[node distance=2cm, auto]
\node (O) {$0$};
\node (O2) [right=.5cm of O] {$\bigoplus_{\mathbb{N}}\pi_2(S^1\times S^3)$};
  \node (A) [right=.5cm of O2] {$\bigoplus_{I_3}\pi_2(U_3,\ast)$};
  \node (B) [right=1cm of A] {$\bigoplus_{I_1}\pi_2(U_1,\ast)$};
  \node (X) [right=.5cm of B] {$\bigoplus_{\mathbb{N}}\pi_2(S^5\setminus\nu(P))$};
  \node (C) [below=1cm of A] {$\bigoplus_{I_2}\pi_2(U_2,\ast)$};
  \node (D) [below=1cm of B] {$\pi_2(S^5\setminus\nu(B(P)),\ast)$};
   \draw[->] (A) to node [above] {$g$} (B);
   \draw[->] (A) to node [above] {} (C);
   \draw[->] (B) to node [right] {$f$} (D);
   \draw[->] (C) to node [above] {} (D);
   \path ([shift={(0,-.2)}]O.east) to node {{\scalebox{2}[1]{$=$}}} ([shift={(0,-.2)}]O2.west);
   \path ([shift={(0,-.2)}]O2.east) to node {{\scalebox{2}[1]{$=$}}} ([shift={(0,-.2)}]A.west);
   \path ([shift={(0,-.2)}]B.east) to node {{\scalebox{2}[1]{$=$}}} ([shift={(0,-.2)}]X.west);
\end{tikzpicture}$$

Because $g\equiv0$, the map $f$ is an injection. Since $\pi_2(S^5\setminus\nu(P))\not\cong 0$ we conclude that $\pi_2(S^5\setminus\nu(B(P)))\not\cong0$ and hence $B(P)$ is not split.
\end{proof}

\begin{proposition}\label{prop:bingmorse}
Let $K$ be a smooth 2-knot with the property that $K$ can be turned into a smoothly unknotted torus via surgery along some arc.   There is a smooth link $P_1\cup P_2$ of unknotted 3-spheres in $S^5$ so that $(P_1\cup P_2)\cap S^4=B(K)=B_1\sqcup B_2$ (with $P_i\cap S^4=B_i)$, and the height function $h$ on $S^5$ restricts to each of $P_1,P_2$ as follows.
\begin{itemize}
\item The function $h|_{P_1}$ is Morse with one local minimum, four index-1 critical points, four index-2 critical points, and one local maximum.
    \item The function $h|_{P_2}$ is Morse with one local minimum, one local maximum, and no other critical points.
\end{itemize}
Moreover, $P_1\cup P_2$ is isotopic to the Bing double $B(P)$ of $P$ for $P$ some simple 3-knot with cross-section $K$.
\end{proposition}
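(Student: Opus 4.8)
The plan is to build $P_1$ explicitly from $K$ using Lemma~\ref{simplelemma}, and then realize the Bing-doubling operation at the level of $S^5$ so that slicing by the equatorial $S^4$ recovers $B(K)$. First I would invoke Lemma~\ref{simplelemma}: since $K$ can be turned into an unknotted torus via surgery along one arc (i.e.\ $g=1$), there is a simple $3$-knot $P\subset S^5$ with cross-section $K$ on which $h$ restricts as a Morse function with one index-$0$ point, two index-$1$ points, two index-$2$ points, and one index-$3$ point. This is the $3$-knot that will appear in the final sentence of the statement; I set $P_1$ and $P_2$ to be the two components of $B(P)$ and must now verify (a) that $B(P)\cap S^4 = B(K)$ and (b) the claimed Morse data.

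For (a), I would use the Remark preceding Proposition~\ref{prop:bingunknotisunlink}: if $P$ meets an equatorial $S^4$ in $K$ and $\Sigma$ is a $2$-sphere in $P$ meeting $K$ in a circle $\gamma$, then $B_\Sigma(P)\cap S^4 = B_\gamma(S)$ where here $S=K$. So the key point is to choose the separating $2$-sphere $\Sigma\subset P$ used to Bing-double $P$ so that $\Sigma\cap S^4$ is a single circle $\gamma\subset K$ — and moreover so that $\gamma$ is exactly (isotopic to) a separating curve bounding the torus-and-genus-$0$ pieces of $K$, since by construction the components of $B(P)$ bound copies of the two pieces of $P\setminus\Sigma$ and we want those pieces to restrict correctly to $S^4$. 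Since $K$ is turned into an unknotted torus by surgery along one arc, the relevant curve $\gamma$ is canonically determined up to isotopy on $K$; I would choose $\Sigma$ to be a $2$-sphere in $P$ that meets the "lower" copy of $S^4$ transversely in $\gamma$. (Since we are really constructing $P$ from the handle decomposition of Lemma~\ref{simplelemma}, we have enough freedom to arrange this.)

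For (b), I would track the Morse data through the Bing-doubling construction, using the description in Remark~\ref{rem:satellite} (or equivalently Remark~\ref{rem:fourcopies}) realized in an ambient isotopy-controlled way with respect to $h$. The component $P_2$ bounds a copy of $B\times I$ where $B$ is the genus-$0$ piece of $P\setminus\Sigma$; since $K$ becomes an unknotted torus after one surgery, the piece of $K$ "away from $\gamma$" on the relevant side is a disk, and the corresponding piece of $P$ is a ball meeting $S^4$ in a disk, so $P_2$ can be arranged to be an unknotted $3$-sphere swept out with just one minimum and one maximum, intersecting $S^4$ in the unknotted component $B_2$ of $B(K)$. The component $P_1$ carries the rest of the topology: it bounds a copy of $A\times I$ where $A\subset P$ is the complement piece. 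Doubling (roughly) doubles the handles and adds a pair to account for the clasp: starting from $P$'s $(1,2,2,2,1)$-count of critical points one expects $P_1$ to have $(1,4,4,1)$ — one minimum, four index-$1$, four index-$2$, one maximum — which is exactly what is claimed; I would make this precise by writing down the banded-sphere / movie description of $B(P)$ near $S^4$ (as in Figure~\ref{fig:bingunknot}, now one dimension up) and counting. The main obstacle, and the step I expect to require the most care, is exactly this Morse-theoretic bookkeeping: arranging the ambient Bing-doubling isotopy of Remark~\ref{rem:satellite} to be carried out within level sets of $h$ so that no extra critical points are introduced, and checking that the index-$1$/index-$2$ count for $P_1$ is not off by a cancelling pair. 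Everything else — existence of $P$ (Lemma~\ref{simplelemma}), the slicing formula (the Remark), and unknottedness of the components (automatic, as noted after Proposition~\ref{prop:bingunknotisunlink}) — is already in hand.
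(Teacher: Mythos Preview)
Your overall strategy---build $P$ via Lemma~\ref{simplelemma}, choose a separating $2$-sphere $\Sigma\subset P$ meeting $S^4$ in a circle $\gamma\subset K$, and then read off the Morse data of $B_\Sigma(P)$---is exactly the paper's approach. But there is a genuine gap at the step you yourself flag as the main obstacle, and also a small confusion earlier.

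The confusion: $K$ is a $2$-\emph{sphere}, so every circle $\gamma$ cuts it into two disks $D_1,D_2$; there is no ``torus-and-genus-$0$'' decomposition of $K$, and $\gamma$ is not canonically determined by the surgery arc. The genus-$4$/genus-$0$ split of components appears only after the index-$1$ critical points are pushed below $h^{-1}(0)$ (as in the proof of Theorem~\ref{thm:main}), not at the level of $K$ itself.

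The gap: the entire asymmetry between $P_1$ and $P_2$ comes from a specific placement that you never make. The paper first isotopes the $3$-dimensional $1$-handle $H$ so that \emph{both of its feet lie in $D_1$}, and then chooses $\Sigma$ so that (i) its unique min and max coincide with those of $h|_P$ and (ii) both index-$1$ critical points of $h|_P$ lie on the $D_1$-side of $P\setminus\Sigma$ (hence, by the mirror construction of $P$, so do both index-$2$ points). With this arrangement the $D_2$-side of $P\setminus\Sigma$ carries no interior critical points of $h|_P$; since each component of $B_\Sigma(P)$ is assembled from two parallel copies of one side, $P_2$ inherits only a min and a max while $P_1$ inherits two copies of each middle-index critical point, giving exactly $(1,4,4,1)$. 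Your heuristic ``doubling doubles the handles and adds a pair for the clasp'' is not the mechanism: there is no extra pair coming from the clasp, and without placing the feet of $H$ inside $D_1$ there is no reason for the critical points to distribute as $(1,4,4,1)$ and $(1,0,0,1)$ rather than, say, $(1,2,2,1)$ on each component (cf.\ Remark~\ref{rem:22}). Identifying and justifying this placement of $H$ relative to $\gamma$ is the missing idea.
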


\begin{proof}
Choose a curve $\gamma$ on $K$ and isotope $B(K)$ to agree with $B_\gamma(K)$. The curve $\gamma$ separates $K$ into disks $D_1$ and $D_2$ so that $B_1,B_2$ are respectively doubles of $D_1,D_2$.  Let $H$ be a 3-dimensional 1-handle with ends on $K$ so that surgering $K$ along $H$ yields an unknotted torus $T$. Isotope $H$ so that the ends of $H$ lie in $D_1$.  By Hirose \cite{hirose}, $T$ bounds a solid torus $V$ so that there is a curve on $T$ bounding a disk in $V$ that is geometrically dual to the core of $H$. As in Lemma~\ref{simplelemma}, we produce a simple 3-knot $P$ with cross-section $K$ by specifying the intersections of $P$ with various subsets of $S^4\times[-1,1]$ as follows (see left of Figure \ref{fig:bingdouble_sliceball}).

\begin{itemize}
 \item $(S^4\times[-1,0], S^4\times[-1,0]\cap P)$ is the mirror of $(S^4\times[0,1], S^4\times[0,1]\cap P)$.
    \item $P\cap (S^4\times [0,1/3))=K\times[0,1/3)$,
    \item $P\cap (S^4\times\{1/3\})=(K\cup H)\times\{1/3\}$,
    \item $P\cap(S^4\times (1/3,1))=T\times(1/3,1)$,
    \item $P\cap(S^4\times\{1\}))=V\times\{1\}$.
\end{itemize}

 Note that we could smooth $P$ so that $h|_P$ is Morse with one local minimum, two index-1 critical points, two index-2 critical points, and one local maximum.\footnote{The resulting 3-manifold $P$ thus admits a handle decomposition with one 0-handle, two cancelling 1/2-handle pairs, and one 3-handle, hence is a 3-sphere. Since $h|_P$ has a unique local minimum, $\pi_1(S^5\setminus\nu(P))$ is cyclic and hence $P$ is simple.}

Now let $\Sigma$ be a separating 2-sphere in $P$ as in Figure \ref{fig:bingdouble_sliceball}. That is, $\Sigma\cap h^{-1}(0)=C$, both index-1 points of $h|_P$ lie in the same component of $P\setminus \Sigma$ as does $D_1$, and $h|_\Sigma$ has one local minimum and one local maximum, which coincide with the local minimum and maximum of $h|_P$. We can then take $\Sigma\cap (V\times\{1\})$ to be a disk. Let $V_1,V_2$ be the genus-2 handlebody and 3-ball (respectively) each obtained by taking one component of $(V\times\{1\})\setminus\Sigma$ and doubling along the intersection with $\Sigma$. Let $H_1,H_2$ denote parallel copies of the 3-dimensional 1-handle $H$, with ends on distinct sheets of $B_1$. Let $B_1'$ denote the result of surgering $B_1$ along $H_1\sqcup H_2$ (note that $B_1'$ bounds a copy of $V_1$ and $B_2$ bounds a copy of $V_2$). 

Now $B_{\Sigma}(P)=P_1\sqcup P_2$ is (up to isotopy) as follows (see right of Figure \ref{fig:bingdouble_sliceball}).

\begin{itemize}
    \item $(S^4\times[-1,0], S^4\times[-1,0]\cap P_1\cup P_2)$ is the mirror of $(S^4\times[0,1], S^4\times[0,1]\cap P_1\cup P_2)$.
    \item $P_1\cap (S^4\times [0,1/3))=B_1\times[0,1/3)$,
    \item $P_1\cap (S^4\times\{1/3\})=(P_1\cup H_1\cup H_2)\times\{1/3\}$,
    \item $P_1\cap(S^4\times (1/3,1))=B_1'\times(1/3,1)$,
    \item $P_1\cap(S^4\times\{1\}))=V_1\times\{1\}$,
    \item $P_2\cap (S^4\times[0,1))=B_2\times[0,1)$,
    \item $P_2\cap (S^4\times\{1\})=V_2\times\{1\}$.
\end{itemize}
We smooth $P_1\sqcup P_2$ and obtain the desired restricted height function.
\end{proof}

\begin{figure}
\labellist
\pinlabel{$h$} at 2.5 -2
\pinlabel{$0$} at -2 10
\pinlabel{$\frac{1}{3}$} at -2 35
\pinlabel{$\frac{2}{3}$} at -2 60
\pinlabel{$1$} at -2 85
\pinlabel{\huge{$\cong$}} at 47 60
\pinlabel{$h$} at 115 -2
\pinlabel{$0$} at 110 10
\pinlabel{$\frac{1}{3}$} at 110 35
\pinlabel{$\frac{2}{3}$} at 110 60
\pinlabel{$1$} at 110 85
\pinlabel{\huge{$\cong$}} at 155.5 60
\pinlabel{$K$} at 16 19
\pinlabel{$D_1$} at 25 11
\pinlabel{$D_2$} at 45 11
\pinlabel{$V$} at 55 95
\pinlabel{$T$} at 55 70
\pinlabel{\textcolor{blue}{$\Sigma$}} at 87 90
\pinlabel{\textcolor{blue}{$C$}} at 38 -1
\pinlabel{\textcolor{gray}{$H$}} at 9 46
\pinlabel{$B_1$} at 124 11
\pinlabel{$B_1'$} at 122 71
\pinlabel{$B_2$} at 154 11
\pinlabel{$V_2$} at 202 85
\pinlabel{$B(T)$} at 199 70
\pinlabel{$V_1$} at 158 85
\pinlabel{\textcolor{gray}{$H_1,H_2$}} at 123 47.5
\endlabellist
    \includegraphics[width=135mm]{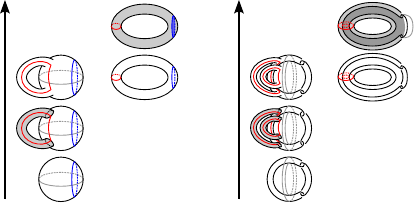}
    \vspace{.1in}

    \caption{The construction of Proposition~\ref{prop:bingmorse}. Left: the portion of the simple 3-knot $P$ contained in $h^{-1}[0,1]$. Right: the portion of the Bing double $B(P)=P_1\sqcup P_2$ contained in $h^{-1}[0,1]$.}\label{fig:bingdouble_sliceball}
\end{figure}

\begin{remark}\label{rem:satoh} By Satoh \cite{satoh}, Proposition~\ref{prop:bingmorse} applies to any twist-spun knot of a classical knot of unknotting number one. In particular, we may take $K$ to be the 2-twist spun trefoil.\end{remark}

\begin{remark}\label{rem:22}
    In Proposition~\ref{prop:bingmorse}, if we take the 2-sphere $\Sigma$ to separate the two index-1 critical points, then we can arrange for $h|_{P_i}$ to have one local minimum, two index-1 critical points, two index-2 critical points, and one local maximum for each $i=1,2$. 
\end{remark}

\section{Constructing splitting spheres}\label{sec:construct}

In this section we show how to construct distinct splitting spheres in the setting of Theorem~\ref{thm:main}. We first present a lemma that is analogous to \cite[Lemma 4.1]{handlebodypaper}.

\begin{lemma}\label{lem:bdryparallel}
    Let $H_1\sqcup H_2$ be a disjoint union of two 3-dimensional handlebodies properly embedded in $B^5$. Suppose that with respect to the radial height function $h:B^5\to\mathbb{R}$, both $h|_{H_1}$ and $h|_{H_2}$ have exactly one local minimum and no index-2 or index-3 critical points. Suppose also that $\partial H_1\sqcup \partial H_2$ is an unlink in $\partial B^5=S^4$. Then $H_1\sqcup H_2$ is smoothly boundary parallel.
\end{lemma}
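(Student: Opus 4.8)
The plan is to follow the strategy of \cite[Lemma~4.1]{handlebodypaper}, the only genuinely new point being the presence of two components, which I would handle by working in two disjoint balls. Since $\partial H_1\sqcup\partial H_2$ is an unlink, fix disjoint $4$-balls $D_1,D_2\subset S^4=\partial B^5$ with $\partial H_i\subset D_i$ the standard unknotted genus-$g_i$ surface, fix a collar $S^4\times[0,1]\subset B^5$ with $S^4\times\{1\}=\partial B^5$, and after an initial isotopy assume $H_i$ agrees with $\partial H_i\times[1-\epsilon,1]$ near $\partial B^5$. It is then enough to show that each $H_i$ can be isotoped, rel $\partial H_i$ and by an isotopy supported in an arbitrarily small neighborhood of $H_i\cup(D_i\times[1-\epsilon,1])$, onto the standard genus-$g_i$ handlebody pushed into $D_i\times[1-\delta,1]$: disjointness of $D_1$ and $D_2$ then keeps $H_1$ and $H_2$ apart throughout and exhibits $H_1\sqcup H_2$ as boundary parallel. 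So I reduce to a single handlebody $H\subset B^5$ of genus $g$ with standard unknotted boundary in a $4$-ball $D$.

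Next I would convert the Morse hypothesis into a handle description. Because $h|_H$ has one local minimum and no index-$2$ or index-$3$ critical points, reading the cross-sections $H\cap h^{-1}(t)$ downward from $t=1$ shows that they stay connected ($\partial H=\Sigma_g$, then $\Sigma_{g-1}$, \dots, then $\Sigma_0=S^2$, then the death at the minimum), and passing down through each index-$1$ critical value just compresses the level surface along a non-separating simple closed curve. Dually, $H$ is built from the collar $\partial H\times I$ by attaching $g$ $2$-handles along a cut system $\gamma_1,\dots,\gamma_g$ of curves on $\partial H$ and one capping $3$-handle, so $H$ is exactly the $3$-manifold determined by a banded-unlink-type movie for $\partial H$. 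The induction is on $g$. For $g=0$, the function $h|_H$ has a single critical point; its gradient flow gives a product structure on $H$ away from the minimum and identifies $H$ with the standard radial $3$-ball bounded by the unknotted $S^2$, which is boundary parallel.

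For $g\ge 1$ I would peel off the topmost band. Let $c$ be the top critical value, straighten the collar above $c$, and consider the band trace $H\cap h^{-1}([c-\delta,c+\delta])$, which attaches $\partial H$ to $\partial H$ compressed along a non-separating curve $\gamma\subset\partial H$ via a compressing $2$-disk $D_\gamma$ lying in the slab. After putting $\gamma$ in a vanishing-Rokhlin position and completing to a vanishing-Rokhlin symplectic basis (as in the proof of Proposition~\ref{prop:bingunknotisunlink}), Hirose's extension theorem \cite{hirose} gives a diffeomorphism of $(S^4,\partial H)$ carrying $\gamma$ to a standard meridian; I would realize this over the height interval $[c-\delta,1]$ by an ambient isotopy of $B^5$ fixing $\partial H$ and everything below height $c-\delta$, and then straighten $D_\gamma$ itself --- a $2$-disk in a $5$-ball with prescribed standard boundary, hence standard rel boundary by general position (codimension $\ge 3$). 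After this the topmost band trace is the standard one; deleting it (equivalently, pushing $\partial H$ down past $\gamma$) leaves a genus-$(g-1)$ handlebody $H'\subset B^5$ that still satisfies the Morse hypotheses and whose boundary, being the standard $\Sigma_g$ compressed along a standard meridian, is the standard unknotted $\Sigma_{g-1}$ in a $4$-ball. By induction $H'$ is boundary parallel, and reattaching the standardized band trace shows $H$ is as well.

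The hard part is the inductive step: making the top band genuinely standard by an isotopy that is rel $\partial H$ and leaves the lower part of $H$ untouched. This requires the correct Rokhlin/spin bookkeeping to invoke \cite{hirose}, together with the observation that the needed change of the attaching curve can be realized by an ambient isotopy of $B^5$ fixing $\partial B^5$ rather than merely by a diffeomorphism of the boundary; it also requires checking that after the peel the residual boundary surface is again the \emph{unknotted} surface so the induction can proceed, and arranging all of this for $H_1$ and $H_2$ inside the disjoint balls $D_1,D_2$ so the two handlebodies remain disjoint. These are precisely the points handled in \cite[Lemma~4.1]{handlebodypaper}, and I would expect the adaptation to two components to be routine once the single-component case is established.
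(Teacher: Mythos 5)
Your reduction to a single component is the step that fails, and it is exactly the point where the two\nobreakdash-component hypothesis has real content. You claim it suffices to isotope each $H_i$ rel $\partial H_i$ onto a standard handlebody via an isotopy \emph{supported in a small neighborhood of} $H_i\cup(D_i\times[1-\epsilon,1])$, and that disjointness of $D_1,D_2$ then keeps the components apart. But the single-component statement (that $H_i$ is boundary-parallel) only provides \emph{some} ambient isotopy of $B^5$; nothing guarantees it can be compressed to have support near $H_i$, and a small neighborhood of $H_1$ in $B^5$ need not be disjoint from $H_2$ in the first place, since the two handlebodies may be geometrically linked even though their boundaries sit in disjoint $4$-balls. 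Your own inductive step makes this visible: the Hirose extension is a diffeomorphism of an entire level $S^4$, realized by an ambient isotopy over a height slab $h^{-1}([c-\delta,1])$, and such an isotopy will in general drag $H_2$ along or push $H_1$ through it. Indeed, the broader theme of this paper (Theorem~\ref{thm:binghandlebody}) is precisely that componentwise conclusions about handlebodies in $B^5$ do not upgrade to conclusions about the link; treating the two-component case as ``routine'' assumes away the difficulty the lemma is designed to address.

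For comparison, the paper's proof never separates the components. It arranges the minima at height $1/4$ and the index-$1$ points at distinct heights in $(2/3,3/4)$, so that $H_1\sqcup H_2$ meets the level $h^{-1}(1/2)$ in an unlink of two $2$-spheres with $\pi_1$ of the complement free on meridians $a,b$. Each $3$-dimensional $1$-handle attached going upward contributes a relation $w a w^{-1}=a$ or $wbw^{-1}=b$, where $w$ is the word traced by its core; since $\pi_1(S^4\setminus\partial(H_1\sqcup H_2))$ is free of rank $2$ by the unlink hypothesis, all these words are trivial, and because the cores are arcs in $4$-dimensional levels, nullhomotopy implies isotopy by general position, so all the handles of both components can be trivialized simultaneously by isotopies inside the levels. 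This trivializes the entire link at once (disjointness is preserved automatically) and avoids both your genus induction and the repeated appeals to \cite{hirose}. If you want to keep your peeling strategy, you would need to reprove the inductive step with explicit control of supports relative to the other component, which is not supplied by general position or by \cite{hirose} as stated.
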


\begin{proof}
Parameterize $h:B^5\to\mathbb{R}$ so that $h(\partial B^5)=1$ and $h$ maps the center of $B^5$ to $0$. Isotope $H_1\sqcup H_2$ so that both index-0 critical points of $h|_{(H_1\sqcup H_2)}$ lie in $h^{-1}(1/4)$, and all index-1 critical points lie in disjoint heights $t_1<\ldots< t_{m+n}$ in $h^{-1}(2/3,3/4)$.

Now $H_1\sqcup H_2$ intersects the 4-sphere $h^{-1}(1/2)$ in a 2-component unlink of 2-spheres $S_1\sqcup S_2$. As the value of $h$ increases, the surfaces $S_1\sqcup S_2$ are successively surgered along 3-dimensional 1-handles. Fix a basepoint of $B^5$ in $h^{-1}(1/2)$, so $\pi_1(h^{-1}(1/2)\setminus(S_1\sqcup S_2))=\langle a,b\rangle$, where $a$ is a meridian of $S_1$ and $b$ is a meridian of $S_2$.

Consider the effect of surgering $S_1$ along a 3-dimensional 1-handle at height $t_1$ whose core arc traces out an arc representing a word $w$ in the letters $a,b$. (Such an arc is completed to a loop by choosing a fixed whisker from $S_1$ to the basepoint; $w$ is thus defined only up to conjugation and multiplication by $a$ at either end.) We obtain the relation $waw^{-1}=a$ in the presentation of $\pi_1(h^{-1}(t_1+\epsilon)\setminus(H_1\sqcup H_2))$. Similarly if the handle instead has ends on $S_2$, then we obtain a relation of the form $wbw^{-1}=b$.

Let $w_1,\ldots, w_m$ be words represented by the cores of the 3-dimensional 1-handles along which $S_1$ is surgered as $h$ increases. Let $W_1,\ldots, W_n$ be words reprsented by the cores of the 3-dimensional 1-handles along which $S_2$ is surgered. Then we obtain a presentation of $\pi_1(h^{-1}(1)\setminus(H_1\sqcup H_2))=\pi_1(S^4\setminus\partial(H_1\sqcup H_2))$ as \[\langle a,b\mid w_1aw_1^{-1}a^{-1}=\cdots=w_maw_ma^{-1}=W_1bW_1^{-1}b^{-1}=\cdots=W_nbW_n^{-1}b^{-1}=1.\rangle.\]

On the other hand, by assumption $\pi_1(S^4\setminus\partial(H_1\sqcup H_2))$ is free on two generators. We conclude that $w_i,W_j=1$ for all $i,j$. Since the cores of the 3-dimensional 1-handles attached to $S_1,S_2$ are 1-dimensional and each $h^{-1}(t)$ is 4-dimensional, up to smooth isotopy preserving $h$ we can trivialize the 3-dimensional 1-handles.  Thus, $H_1\sqcup H_2$ is smoothly boundary-parallel.
\end{proof}

We are now ready to construct splitting spheres. We restate the main theorem again here.

\begin{mainthm}
    Let $L=L_1\sqcup L_2$ be a smooth 2-component unlink in $S^4$ with $g(L_1)\ge 4$. There exist smooth splitting spheres $S_1,S_2$ for $L$ so that $S_1,S_2$ are not topologically isotopic in $S^4\setminus L$. 
\end{mainthm}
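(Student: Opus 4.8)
The plan is to build the two splitting spheres explicitly from the Bing-double machinery of Sections \ref{sec:bing} and \ref{sec:simple}, and then to detect that they are non-isotopic by a fundamental-group (or $\pi_2$) argument mirroring Lemma \ref{lemma:bingnotsplit}. First I would fix a smooth $2$-knot $K$ that is \emph{not} doubly slice but becomes an unknotted torus after surgery along one arc --- for instance the $2$-twist spun trefoil, which is admissible by Remark \ref{rem:satoh} --- and apply Proposition \ref{prop:bingmorse} to obtain a simple $3$-knot $P$ with cross-section $K$ together with a link $P_1\sqcup P_2$ of unknotted $3$-spheres in $S^5$, isotopic to the Bing double $B(P)$, whose height functions have the prescribed critical points ($P_1$: one min, four index-$1$, four index-$2$, one max; $P_2$: one min, one max). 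Intersecting with the equatorial $S^4$, the surface-link $B(K)=B_1\sqcup B_2$ appears, with $g(B_1)=4$ and $g(B_2)=0$; by Proposition \ref{prop:bingunknotisunlink} this is the $2$-component unlink $L_{4,0}$. Stabilizing $K$ (or taking connected sums with unknotted handles on the appropriate side) allows one to push $g(B_1)$ up to any $m\ge 4$ and $g(B_2)$ up to any $n\ge 0$, so after an ambient isotopy we may assume $B(K)=L_{m,n}=L$.

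Next I would produce the two splitting spheres. Think of $P_1\sqcup P_2\subset S^5$ as sitting in $S^4\times[-1,1]$ with $L=B(K)$ in the middle slice $S^4\times\{0\}$. The ``obvious'' splitting sphere $S_1$ is the one coming from the product structure: since $P_2$ is unknotted and in fact $B_2$ is an unknot bounding a $3$-ball, a round $3$-sphere in $S^4\setminus\nu(L)$ separating $B_1$ from $B_2$ can be slid up through $h^{-1}[0,1]$ trivially, giving a standard splitting sphere that bounds balls on both sides (this realizes the ``smooth $4$-ball on each side'' claim of Remark \ref{rem:smooth}). The second splitting sphere $S_2$ is obtained by performing the Bing-double satellite of Remark \ref{rem:satellite} one dimension up: the complement $S^5\setminus\nu(B(P))$ decomposes as $(S^5\setminus\nu(P))\cup X_2$ glued along $S^1\times S^3$, and the core $S^1\times\pt$ of that gluing torus, pushed slightly into the complement, intersects $S^4\times\{0\}$ in a $3$-sphere $S_2$ that separates $L_1$ from $L_2$. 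Equivalently, $S_2$ is the boundary of a neighborhood of the arc/handlebody that records the nontrivial knotting of $P$ above the middle level; by Lemma \ref{lem:bdryparallel} (applied to the genus data: $h|_{H_1}$ has one min and no higher-index critical points once we push everything to one side) the pieces $H_1\sqcup H_2$ cut off by $S_2$ are boundary-parallel, so $S_2$ is genuinely a splitting sphere and each side is a $4$-ball, but the way it sits relative to $L$ is twisted by $P$.

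The heart of the argument is then to show $S_1$ and $S_2$ are not topologically isotopic in $S^4\setminus L$. An isotopy $S_1\simeq S_2$ rel $L$ would in particular give a homeomorphism $S^4\setminus\nu(L)\to S^4\setminus\nu(L)$ carrying the two balls cut off by $S_1$ to those cut off by $S_2$; spinning/suspending this picture back up to $S^5$ would exhibit $B(P)$ as split, because $S_1$ being standard means the $S^5$-version of its splitting region is an honest product $S^3\times I$ disjoint from $B(P)$ on each side, whereas $S_2$'s region carries the knotting of $P$. More precisely, I would argue contrapositively: the existence of such an isotopy forces $\pi_1$ (and, in the simple case, $\pi_2$) of the relevant complement to agree with that of the unlink-of-$3$-spheres complement, i.e.\ forces $B(P)$ to be split, contradicting Lemma \ref{lemma:bingnotsplit} since $P$ is a nontrivial (simple) $3$-knot. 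The translation from ``$S_1,S_2$ isotopic in $S^4\setminus L$'' to ``$B(P)$ split in $S^5$'' is exactly the kind of dimension-raising argument used in \cite{handlebodypaper}: one glues the trace of the isotopy across $S^4\times I \subset S^5$ to convert a would-be isotopy of splitting spheres into a diffeomorphism of $S^5$ taking $B(P)$ to an unlink.

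The main obstacle I anticipate is precisely making this last dimension-raising/suspension step rigorous in the \emph{topological} category: one must check that a topological ambient isotopy of $S_1$ to $S_2$ in $S^4\setminus L$ really does assemble (via the product structure on $S^4\times[-1,1]\subset S^5$ and the mirror symmetry of the construction) into a homeomorphism of $S^5$ detecting splitness of $B(P)$, controlling what happens to the pieces $X_1\cong S^5\setminus\nu(P)$ and $X_2$ under the glued map, and verifying that the $\pi_1$/$\pi_2$ pushout computation of Lemma \ref{lemma:bingnotsplit} still obstructs the existence of such a homeomorphism. A secondary technical point is bookkeeping the genus: ensuring that after stabilizing to arrange $g(L_1)=m$, $g(L_2)=n$ the Morse-theoretic hypotheses of Lemma \ref{lem:bdryparallel} and Proposition \ref{prop:bingmorse} are still met, and that the stabilizations do not accidentally make $P$ trivial --- this is where the hypothesis $m\ge 4$ (four index-$1$ critical points in Proposition \ref{prop:bingmorse}, i.e.\ two arcs worth of Bing-doubling plus the surgery data) is used, with the $\{(2,2),(2,3),(3,3)\}$ cases of Remark \ref{rem:22} requiring the balanced critical-point distribution instead.
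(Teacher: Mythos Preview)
Your high-level strategy is exactly the paper's: build $B(P)$ for a nontrivial simple $3$-knot $P$, look at a cross-section in $S^4$, and argue that an isotopy of two splitting spheres in $S^4\setminus L$ would force $B(P)$ to be split, contradicting Lemma~\ref{lemma:bingnotsplit}. But the two places where your proposal diverges from the paper both contain real gaps.

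First, your description of the cross-section and of $S_2$ is off. The equatorial cross-section $B(K)$ of $B(P)$ as produced by Proposition~\ref{prop:bingmorse} is the Bing double of the $2$-\emph{sphere} $K$, so \emph{both} components have genus $0$; you do not get $g(B_1)=4$ at that stage. The paper first reorders the critical points of $h|_{B(P)}$ so that all index-$0$ and index-$1$ points lie below $h^{-1}(0)$ and all index-$2$ and index-$3$ points lie above; only then does the cross-section become a Bing double of an unknotted genus-$2$ surface along a separating curve, which by Proposition~\ref{prop:bingunknotisunlink} is the unlink $L_{4,0}$. More seriously, your construction of $S_2$ --- as ``the core $S^1\times\pt$ of the gluing $S^1\times S^3$ intersected with $S^4\times\{0\}$'' --- does not produce a $3$-sphere: that core is a circle in $S^5$, and the alternative reading (a fiber $\pt\times S^3$) is a $3$-sphere in $S^5$, not in the equatorial $S^4$. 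So as written you have not actually produced the second splitting sphere.

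Second, and relatedly, the paper's construction of $S_1,S_2$ is completely symmetric, and this symmetry is what makes the non-isotopy step a one-liner rather than the delicate ``suspension'' you worry about. After the reordering above, Lemma~\ref{lem:bdryparallel} applies \emph{separately} to $V_1:=B(P)\cap h^{-1}(-\infty,0]$ and to $V_2:=B(P)\cap h^{-1}[0,\infty)$: each is boundary-parallel in its $5$-ball. This yields splitting spheres $S_1,S_2\subset h^{-1}(0)$ \emph{together with} properly embedded $4$-balls $B_1\subset h^{-1}(-\infty,0]$ and $B_2\subset h^{-1}[0,\infty)$, with $\partial B_i=S_i$ and $B_i$ disjoint from $V_i$. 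Now if $S_1$ were topologically isotopic to $S_2$ in $S^4\setminus L$, extend that isotopy over the lower $5$-ball; the image $B_1'$ of $B_1$ has $\partial B_1'=S_2$ and is still disjoint from $V_1$, so $B_1'\cup B_2$ is an embedded $4$-sphere in $S^5$ disjoint from $B(P)$ and separating its components --- i.e.\ $B(P)$ is split, contradicting Lemma~\ref{lemma:bingnotsplit}. No further $\pi_1/\pi_2$ computation, no suspension, and no comparison with a ``standard'' splitting sphere are needed at this step; the $4$-balls $B_1,B_2$ are the whole point, and you have not built them. Your stabilization idea for general $(m,n)$ is fine and matches the paper.
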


We include a schematic of the proof of Theorem~\ref{thm:main} in the  case $g(L_1)=4,g(L_2)=0$ in Figure \ref{fig:schematic_proof}.

\begin{figure}
\labellist
\pinlabel{$P$} at 30 110
\pinlabel{$B(P)$} at 87 110
\pinlabel{$B(P)$} at 227 110
\pinlabel{$U_1$} at 155 60
\pinlabel{$U_2$} at 227 60
\pinlabel{Bing} at 79 67
\pinlabel{$h$} at -5 58

\pinlabel{$\ast$} at 44 13
\pinlabel{$\ast$} at 36 32
\pinlabel{$\ast$} at 52 32
\pinlabel{$\ast$} at 44 107
\pinlabel{$\ast$} at 52 88
\pinlabel{$\ast$} at 36 88
\pinlabel{$0$} at 44 6
\pinlabel{$1$} at 44 32
\pinlabel{$3$} at 44 114
\pinlabel{$2$} at 44 88

\pinlabel{$\ast$} at 134 13
\pinlabel{$0$} at 134 6
\pinlabel{$\ast$} at 134 107
\pinlabel{$3$} at 134 114

\pinlabel{$\ast$} at 117 13
\pinlabel{$0$} at 117 6
\pinlabel{$\ast$} at 117 107
\pinlabel{$3$} at 117 114
\pinlabel{$\ast$} at 106 32
\pinlabel{$\ast$} at 113 32
\pinlabel{$\ast$} at 120 32
\pinlabel{$\ast$} at 127 32
\pinlabel{$1$} at 117.5 25
\pinlabel{$\ast$} at 106 88
\pinlabel{$\ast$} at 113 88
\pinlabel{$\ast$} at 120 88
\pinlabel{$\ast$} at 127 88
\pinlabel{$2$} at 117.5 95

\pinlabel{$\ast$} at 173 13
\pinlabel{$0$} at 173 6
\pinlabel{$\ast$} at 173 107
\pinlabel{$3$} at 173 114
\pinlabel{$\ast$} at 208 13
\pinlabel{$0$} at 208 6
\pinlabel{$\ast$} at 208 107
\pinlabel{$3$} at 208 114
\pinlabel{$\ast$} at 166 32
\pinlabel{$\ast$} at 171 32
\pinlabel{$\ast$} at 176 32
\pinlabel{$\ast$} at 181 32
\pinlabel{$1$} at 173.5 25
\pinlabel{$\ast$} at 166 88
\pinlabel{$\ast$} at 171 88
\pinlabel{$\ast$} at 176 88
\pinlabel{$\ast$} at 181 88
\pinlabel{$2$} at 173.5 95
\endlabellist
    \includegraphics[width=110mm]{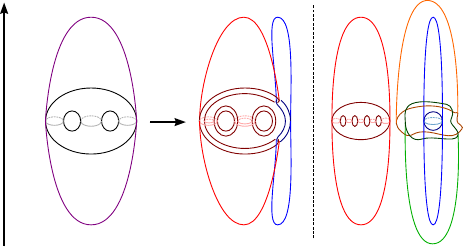}
    \caption{The proof of Theorem~\ref{thm:main}. We first construct a nontrivial, simple 3-knot $P$ and its Bing double $B(P)$ with the property that $h|_{B(P)}$ is as in Proposition~\ref{prop:bingmorse} (here we indicate a critical point with $\ast$ and write the index). Then by Lemma~\ref{lem:bdryparallel}, the top and bottom halves (that is, on either side of $S^4\times 0$) of $B(P)$ are smoothly boundary-parallel, and hence splittable by properly embedded smooth 4-balls in $h^{-1}(-\infty,0]$ and $h^{-1}[0,\infty)$. These 4-balls intersect $h^{-1}(0)$ in smooth splitting spheres for an unlink $L$ of a genus-0 and a genus-4 surface. These splitting spheres cannot be topologically isotopic in $S^4\setminus L$, or else the link $B(P)$ would be split, contradicting Lemma~\ref{lemma:bingnotsplit}.}\label{fig:schematic_proof}
\end{figure}
\begin{proof}
Let $U_1\sqcup U_2$ be the 2-component unlink in $S^4$ with $U_1$ a genus-4 surface and $U_2$ a 2-sphere. Let $K$ be the $2$-twist spun trefoil.  By Proposition~\ref{prop:bingmorse} and Remark \ref{rem:satoh}, there is a standard height function $h:S^5\to\mathbb{R}$ and a smooth simple 3-knot $P\subset S^5$ so that $P$ intersects $h^{-1}(0)\cong S^4$ in a copy of $K$ and $B(P)=P_1\sqcup P_2$ intersects $h^{-1}(0)$ in $B(K)=K_1\sqcup K_2$ (with $P_i\cap S^4=K_i$). Moreover, the height function $h$ on $S^5$ restricts to $P$ in a Morse function with one local minimum, two index-1 points, two index-2 points, and one local maximum; and restricts to $B(P)$ in a Morse function with one local minimum and local maximum on each $P_1,P_2$ in addition to four index-1 and four index-2 critical points on $P_1$. By Stoltzfus \cite{stoltzfus}, $P$ is a nontrivial 3-knot.

Smoothly isotope $P$ and $B(P)$ to reorder the critical points of $h|_P$ and $h|_{B(P)}$ so that index-0 and index-1 points are in $h^{-1}(-\infty,0)$ and index-2 and index-3 points are in $h^{-1}(0,\infty).$

Now $P$ intersects $h^{-1}(0)$ in a trivial genus-2 surface $\Sigma$ and $B(P)$ intersects $h^{-1}(0)$ in a Bing double $B_\gamma(\Sigma)$ , where $\gamma$ is a curve bounding a disk in $\Sigma$.  By Proposition~\ref{prop:bingunknotisunlink}, $B_\gamma(\Sigma)$ is smoothly isotopic to the unlink $U_1\sqcup U_2$. Thus, $B(P)$ intersects $h^{-1}(0)$ in a copy of $U_1\sqcup U_2$, with $P_i\cap h^{-1}(0)=U_i$.

Consider the 2-component 3-manifold $V_1=B(P)\cap h^{-1}(\infty,0]$ properly embedded in $h^{-1}(-\infty,0]\cong B^5$. By Lemma~\ref{lem:bdryparallel}, $V_1$ is boundary-parallel. Let $S_1$ be a splitting sphere for $U_1\sqcup U_2$ in $S^4=h^{-1}(0)$ so that $V_1$ can be pushed into $S^4=h^{-1}(0)$ in the complement of $S_1$. Then there exists a trivial 4-ball $B_1$ properly embedded in $h^{-1}(\infty,0]$ with boundary $S_1$ that is disjoint from $V_1$. Similarly, apply Lemma~\ref{lem:bdryparallel} to the 2-component 3-manifold $V_2=B(P)\cap h^{-1}[0,\infty)$ properly embedded in $h^{-1}[0,\infty)\cong B^5$ to obtain another splitting sphere $S_2$ for $U_1\sqcup U_2$ in $h^{-1}(0)$ that bounds a 4-ball $B_2$ in $h^{-1}[0,\infty)$ in the complement of $V_2$.

Now suppose that there is a topological isotopy of $S^4\setminus(U_1\sqcup U_2)$ that takes $S_1$ to $S_2$. Then we can extend this isotopy over $h^{-1}(-\infty,0]\setminus P$. This isotopy takes the ball $B_1$ to some other 4-ball $B_1'$ whose boundary is $S_2$. Then $B_1'\cup B_2$ forms a splitting sphere for $B(P)$. But since $P$ is nontrivial, by Lemma~\ref{lemma:bingnotsplit} the Bing double $B(P)$ is not split and hence we obtain a contradiction. We thus conclude that the splitting spheres $S_1,S_2$ for $U_1\sqcup U_2$ are not topologically isotopic in $S^4\setminus (U_1\sqcup U_2)$, completing the proof in the case that $g(L_1)=4, g(L_2)=0$ 
(here setting $L_i=U_i$).

Now consider perturbing either component $P_1,P_2$ of $B(P)$ to introduce a new index-1 critical point of $h|_{P_i}$ in $h^{-1}(-\infty,0]$ and a new index-2 critical point of $h|_{P_i}$ in $h^{-1}[0,\infty)$. Now $S_1,S_2$ are splitting spheres for an unlink obtained by adding a tube to either component of $U_1\sqcup U_2$, and again cannot be topologically isotopic in the unlink complement or else $B(P)$ would be split. By repeating this argument, we obtain non-isotopic splitting spheres in the complement of $L=L_1\sqcup L_2$.
\end{proof}

\begin{remark}\label{rem:proof22}
    We can adapt the proof of Theorem~\ref{thm:main} to the case when $(g(L_1),g(L_2))$ lies in the set $\{(2,2),(2,3),(3,3)\}$ using the observation of Remark \ref{rem:22}: the Morse function on $B(P)$ as constructed in Proposition~\ref{prop:bingmorse} can be taken to have two index-1 critical points on each component, so that the unlink cross-section of $B(P)$ is a split union of two genus-2 surfaces, rather than a genus-4 surface and a 2-sphere.
\end{remark}

\begin{corollary}
Let $K_1\sqcup K_2$ be a split, doubly slice link of 2-spheres in $S^4$. Let $\Sigma_1$ be a genus-4 surface obtained by trivially stabilizing $K_1$ four times. Then the link $\Sigma_1\sqcup K_2$ has two smooth splitting spheres that are not topologically isotopic in the complement of $\Sigma_1\sqcup K_2$.
\end{corollary}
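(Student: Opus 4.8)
The plan is to reduce this statement to the machinery built in the proof of Theorem~\ref{thm:main}, replacing the specific choice of 2-twist spun trefoil with the general hypothesis that $K_1 \sqcup K_2$ is split and doubly slice. First I would observe that since $K_1 \sqcup K_2$ is split, after an isotopy we may assume $K_1$ and $K_2$ lie in disjoint 4-balls in $S^4$, so that the pair $(K_1 \sqcup K_2)$ is the split union of the individual 2-knots $K_1$ and $K_2$. Being doubly slice for the link means each $K_i$ is a cross-section of an unknotted 3-sphere $P_i' \subset S^5$; taking the split union gives an unknotted (hence simple) 3-knot that is the split union $P_1' \sqcup P_2'$... but this is not yet what I want, since I need a \emph{connected} 3-knot $P$ whose Bing double is nonsplit. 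The key point is instead that doubly-sliceness of $K_1$ (the first component, which is the one being stabilized) together with the fact that $K_2$ is a cross-section of an unknotted 3-sphere lets me build a 3-knot $P$ with cross-section $K_1 \# (\text{something})$; more precisely, I should apply Lemma~\ref{simplelemma} directly.

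So the actual plan: since $K_1 \sqcup K_2$ is split, the relevant 2-knot cross-section for the Bing doubling construction should be taken to be $K_1$, and the statement that "$\Sigma_1$ is $K_1$ trivially stabilized four times" matches exactly the setup of Proposition~\ref{prop:bingmorse} and Remark~\ref{rem:satoh} once we know $K_1$ can be turned into a smoothly unknotted surface by surgery along arcs — but here we instead use Lemma~\ref{simplelemma}, which only requires that $K_1$ be a smooth 2-knot to produce a simple 3-knot $P$ with cross-section $K_1$; the stabilization count dictates the number of critical points. The crucial additional input is the analogue of Lemma~\ref{lemma:bingnotsplit}: I need $B(P)$ to be nonsplit, which by Lemma~\ref{lemma:bingnotsplit} holds precisely when $P$ is a nontrivial 3-knot. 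Thus I need to arrange that $P$ is nontrivial, i.e., $\pi_1(S^5 \setminus \nu(P)) \cong \mathbb{Z}$ but $P$ does not bound a ball. By the Kreck--Su argument quoted in the proof of Lemma~\ref{lemma:bingnotsplit}, $P$ (which is simple) fails to bound a ball as soon as $\pi_2(S^5 \setminus \nu(P)) \neq 0$, and by Lemma~\ref{simplelemma} combined with the discussion there, a doubly slice $K_1$ would force $P$ to possibly be trivial — so I want $K_1$ \emph{not} doubly slice as a 2-knot. Here is where I must be careful: the hypothesis is that the \emph{link} $K_1 \sqcup K_2$ is doubly slice, which does \emph{not} imply $K_1$ is doubly slice as a knot (a split link being doubly slice only gives that each component is slice, or that the connected sum is doubly slice). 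Wait — actually I should reread: a split doubly slice link means the link bounds a split union of unknotted... no. Let me restructure.

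The honest plan: the corollary follows by running the proof of Theorem~\ref{thm:main} verbatim, but starting from the hypothesis that $K_1$ is the cross-section of a \emph{nontrivial} simple 3-knot $P$, and taking $P_2$ to be an unknotted 3-sphere with cross-section $K_2$ (which exists since a split doubly slice link has each component doubly slice, hence each $K_i$ is a cross-section of an unknotted 3-sphere). The split union $P \sqcup P_2$ — hmm, its Bing double would be the split union of $B(P)$ and $B(P_2)$, which \emph{is} split. This tells me the Bing double must be taken of a \emph{single} 3-knot $P$ with two cross-sectional components. The cleanest fix, and the one I would pursue, is: the relevant ambient object is a single simple 3-knot $P$ in $S^5$ whose $h^{-1}(0)$ cross-section is $K_1 \# (\text{unknot with a separating curve})$, set up exactly as in Proposition~\ref{prop:bingmorse} so that $B(P) \cap h^{-1}(0) = U_1 \sqcup U_2$ with $U_1$ genus-$4$ (a stabilization of $K_1$) and $U_2$ a $2$-knot isotopic to $K_2$. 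Then Lemma~\ref{lemma:bingnotsplit} applies provided $P$ is nontrivial, which holds because $P$ has cross-section $K_1$ and we arrange $P$ simple with $\pi_2(S^5 \setminus \nu(P)) \neq 0$; this last is where doubly-sliceness enters in the \emph{negative} direction. I now suspect the intended reading is that $K_1 \sqcup K_2$ doubly slice forces $U_2 \simeq K_2$ to indeed be a cross-section of an \emph{unknotted} $P_2$, which is exactly what Proposition~\ref{prop:bingmorse} needs for its second component, and the nontriviality of $P$ comes for free from choosing $K_1$ appropriately (e.g., the nontrivial simple 3-knot whose cross section stabilizes to the genus-4 unknot).

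The main obstacle, and the step I expect to require the most care, is exactly this interface: verifying that the hypothesis "$K_1 \sqcup K_2$ split doubly slice" supplies precisely the data needed to run Proposition~\ref{prop:bingmorse} (an unknotted $P_2$ with cross-section $K_2$, and the matching of the cross-section of $B(P)$ with $\Sigma_1 \sqcup K_2$), while the nontriviality of $P$ — which is what makes $B(P)$ nonsplit via Lemma~\ref{lemma:bingnotsplit}, and hence makes $S_1, S_2$ non-isotopic — is guaranteed independently by choosing $K_1$ to be a cross-section of a genuinely knotted simple 3-knot (for instance via Lemma~\ref{simplelemma} applied to a non-doubly-slice 2-knot such as the 2-twist spun trefoil, which is the specific case already handled). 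Once these identifications are in place, the argument proceeds identically to the proof of Theorem~\ref{thm:main}: Lemma~\ref{lem:bdryparallel} splits the top and bottom halves of $B(P)$ by properly embedded smooth 4-balls, these meet $h^{-1}(0)$ in smooth splitting spheres $S_1, S_2$ for $\Sigma_1 \sqcup K_2$, and a topological isotopy between them in the complement would exhibit $B(P)$ as split, contradicting Lemma~\ref{lemma:bingnotsplit}.
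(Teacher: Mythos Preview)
Your proposal never lands on a working construction, and the central difficulty you keep circling is real: you cannot make the given $K_1$ serve as the cross-section of the nontrivial simple 3-knot $P$. The knot $K_1$ is part of the hypothesis, not something you may choose, so the sentence ``the nontriviality of $P$ \ldots is guaranteed independently by choosing $K_1$ to be a cross-section of a genuinely knotted simple 3-knot'' is a non-starter. Worse, double sliceness tells you precisely that $K_1$ is the cross-section of an \emph{unknotted} 3-sphere, which is the opposite of what your scheme needs. And even granting a nontrivial $P$ with cross-section $K_1$, your proposal gives no mechanism by which $K_2$ would appear as the cross-section of the second component of $B(P)$: Bing doubling a 3-knot with cross-section $K_1$ yields $B(K_1)$ in $S^4$, which is a link of unknotted spheres having nothing to do with $K_2$.

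The paper's argument avoids all of this by keeping the 3-knot $P$ exactly as in Theorem~\ref{thm:main} (built from the 2-twist spun trefoil) and then modifying $B(P)=P_1\sqcup P_2$ after the fact. Double sliceness of $K_1\sqcup K_2$ furnishes small unknotted 3-knots $Q_1,Q_2\subset S^5$ with $Q_i\cap S^4=K_i$; one then connect-sums $Q_i$ onto $P_i$. Since each $Q_i$ is unknotted, the resulting link is still isotopic to $B(P)$ and hence remains nonsplit by Lemma~\ref{lemma:bingnotsplit}, while its cross-section becomes $(U_1\# K_1)\sqcup(U_2\# K_2)=\Sigma_1\sqcup K_2$. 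The rest of the proof of Theorem~\ref{thm:main} then runs unchanged. The missing idea in your proposal is exactly this connect-sum trick: use double sliceness to produce \emph{unknotted} 3-knots with the prescribed cross-sections and splice them onto the already-constructed nonsplit $B(P)$, rather than trying to build $P$ itself out of $K_1$.
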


\begin{proof}
We repeat the proof of Theorem~\ref{thm:main} in the case $g(L_1)=4,g(L_2)=0$. Let $Q_1,Q_2$ be small unknotted 3-knots meeting $S^4$ in $K_1, K_2$, respectively. Then we can connect-sum on $Q_1,Q_2$ to $P_1,P_2$ respectively and continue the argument of Theorem~\ref{thm:main} to obtain two non-isotopic splitting spheres for $\Sigma_1\sqcup K_2$.
\end{proof}

As a consequence of the proof of Theorem~\ref{thm:main}, we are now ready to prove Theorem~\ref{thm:binghandlebody}.

\begin{binghandlebody}
Let $m,n$ be non-negative integers with $m\ge 4$. There exist two smooth, properly embedded links $H_1\sqcup H_2,\thicktilde{H}_1\sqcup \thicktilde{H}_2$ of handlebodies in $B^5$ with $g(H_1)=g(\thicktilde{H_1})=m$, $g(H_2)=g(\thicktilde{H}_2)=n$ so that the following are true.
    \begin{itemize}
        \item The 2-component links $H_1\sqcup H_2$ and $\thicktilde{H}_1\sqcup \thicktilde{H}_2$ are each smoothly boundary-parallel in $B^5$.
        \item The boundaries agree componentwise, i.e., $\partial H_1=\partial \thicktilde{H}_1, \partial H_2=\partial\thicktilde{H}_2$.
        \item For each $i=1,2$, the handlebodies $H_i,\thicktilde{H}_i$ are smoothly isotopic rel.\ boundary. 
         \item The 2-component links $H_1\sqcup H_2$ and $\thicktilde{H}_1\sqcup \thicktilde{H}_2$ are not topologically isotopic rel.\ boundary.
    \end{itemize}
\end{binghandlebody}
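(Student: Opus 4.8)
The plan is to extract Theorem~\ref{thm:binghandlebody} directly from the geometry already assembled in the proof of Theorem~\ref{thm:main}, essentially by "turning the picture on its side." Recall that in that proof we produce a simple nontrivial $3$-knot $P\subset S^5$ whose Bing double $B(P)=P_1\sqcup P_2$ meets the equatorial $S^4=h^{-1}(0)$ in an unlink $U_1\sqcup U_2$ (a genus-$m$ surface and a genus-$n$ surface, after the stabilization argument of the last paragraph of that proof and Remark~\ref{rem:proof22}), and so that the lower piece $V_1=B(P)\cap h^{-1}(-\infty,0]$ and the upper piece $V_2=B(P)\cap h^{-1}[0,\infty)$ are each smoothly boundary-parallel $2$-component handlebody-links properly embedded in a copy of $B^5$. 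These $V_1,V_2$ are exactly the handlebody-links we want: set $H_1\sqcup H_2 := V_1 \subset h^{-1}(-\infty,0]\cong B^5$, and let $\thicktilde H_1\sqcup\thicktilde H_2$ be the mirror image of $V_2$, reflected into the \emph{same} ball $h^{-1}(-\infty,0]$ via an orientation-reversing identification of $h^{-1}[0,\infty)$ with $h^{-1}(-\infty,0]$ fixing $h^{-1}(0)=S^4$ pointwise. By construction $g(H_1)=g(\thicktilde H_1)=m$ and $g(H_2)=g(\thicktilde H_2)=n$ (the genera are what was arranged in Proposition~\ref{prop:bingmorse}/Remark~\ref{rem:22} together with the perturbation argument), the boundaries agree componentwise since both are the common cross-section $U_1\sqcup U_2$, and smooth boundary-parallelism for each link is exactly the conclusion of Lemma~\ref{lem:bdryparallel} applied to $V_1$ and to $V_2$.

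Next I would verify the third bullet: for each $i$, the handlebodies $H_i$ and $\thicktilde H_i$ are smoothly isotopic rel boundary. Since each $H_i$ is a smooth handlebody properly embedded in $B^5$ with $h|_{H_i}$ Morse with a single index-$0$ critical point and no index-$2$ or index-$3$ critical points (this is the hypothesis of Lemma~\ref{lem:bdryparallel}, which holds for each component of $V_1$, and symmetrically for $V_2$), and since $H_i$ is boundary-parallel, $H_i$ is carried by a smooth isotopy rel boundary to a standard collar pushoff of $\partial H_i = U_i$; the same is true of $\thicktilde H_i$. Indeed the proof of Lemma~\ref{lem:bdryparallel} shows precisely that such a handlebody, component by component, can be isotoped rel boundary (trivializing the $1$-handles, which are $1$-dimensional arcs moving in $4$-dimensional level sets) to the boundary-parallel model; composing these two isotopies rel $U_i$ gives the required isotopy from $H_i$ to $\thicktilde H_i$. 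I would state this as a short lemma ("a boundary-parallel handlebody-component with the stated Morse-theoretic restriction is smoothly isotopic rel boundary to the standard pushoff") and cite the argument inside Lemma~\ref{lem:bdryparallel}.

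The main obstacle — and the real content — is the fourth bullet: $H_1\sqcup H_2$ and $\thicktilde H_1\sqcup\thicktilde H_2$ are \emph{not} topologically isotopic rel boundary. The key point is that a topological isotopy rel boundary from $H_1\sqcup H_2$ to $\thicktilde H_1\sqcup\thicktilde H_2$ inside $h^{-1}(-\infty,0]\cong B^5$, when reflected back to $h^{-1}[0,\infty)$ and glued to the identity on the lower half, would produce a self-homeomorphism of $S^5$ (or at least an ambient isotopy in $S^5$) carrying $B(P)=V_1\cup_{S^4} V_2$ to $V_1\cup_{S^4}\overline{V_1} = $ the double of $V_1$ across $S^4$. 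But the double of a boundary-parallel handlebody-link across $S^4$ is the trivial handlebody-link doubled, i.e.\ a split union of unknotted surface-complements — concretely it is isotopic to the original unlink $U_1\sqcup U_2$ pushed off both sides, which is a \emph{split} link. Thus such a topological isotopy would exhibit $B(P)$ as split, contradicting Lemma~\ref{lemma:bingnotsplit} since $P$ is a nontrivial $3$-knot. Here I must be careful about the rel-boundary hypothesis: because the isotopy fixes $\partial H_i = U_i$ pointwise throughout, it genuinely extends by the identity across $h^{-1}(0)$ and glues to a self-isotopy of $S^5$ taking $B(P)$ to this double, so the split link $B(P)$ would follow; equivalently, one runs the contrapositive exactly as in the last displayed implication of the proof of Theorem~\ref{thm:main}. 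I expect the only subtlety to be bookkeeping the orientation-reversing identification and confirming that "double of a boundary-parallel handlebody-link" is split (which is elementary: it is a boundary connected-sum-free description, and the doubled standard model visibly splits along the sphere $S_1$ or $S_2$ produced in the proof of Theorem~\ref{thm:main}). With that, all four bullets hold and the theorem follows.
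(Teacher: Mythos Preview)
Your construction has a genuine gap in the third bullet that cannot be repaired. You set $\thicktilde{H}_i = r(P_i^+)$, where $P_i^+ = P_i\cap h^{-1}[0,\infty)$ and $r$ is the reflection of $S^5$ fixing $S^4$ pointwise. You then claim that $H_i$ and $\thicktilde{H}_i$ are each isotopic rel boundary to ``the standard pushoff'' via the argument inside Lemma~\ref{lem:bdryparallel}. But there is no unique boundary-parallel model: Lemma~\ref{lem:bdryparallel} only trivializes the $1$-handle cores, and the resulting handlebody in $S^4$ still remembers its compressing-disk system on $U_i$. Since $r|_{S^4}=\mathrm{id}$, the handlebody $\thicktilde{H}_1=r(P_1^+)$ has exactly the same compressing curves on $U_1$ as $P_1^+$. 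On the other hand, the compressing curves of $H_1$ and of $P_1^+$ together form a genus-$m$ Heegaard diagram for $P_1\cong S^3$; for $m\ge 1$ these two curve systems cannot coincide (else the diagram would describe $\#_m(S^1\times S^2)$). Concretely, if there were an ambient isotopy $\Phi_t$ of the lower ball fixing $S^4$ with $\Phi_1(H_1)=\thicktilde{H}_1$, then extending by the identity on the upper ball would carry $P_1=H_1\cup P_1^+$ to $\thicktilde{H}_1\cup P_1^+$, which is the double of $P_1^+$ and hence diffeomorphic to $\#_m(S^1\times S^2)$ --- contradicting $P_1\cong S^3$. So with your reflected construction, bullet three is \emph{false} for $i=1$, independent of whether $P$ is knotted.

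The paper avoids this by a genuinely different construction: rather than reflecting the upper half, it keeps both halves of $P_1$ fixed and instead \emph{translates} $P_2$ horizontally to a copy $P_2'$ split from $P_1$, then applies an ambient isotopy of $S^5$ (extending an isotopy of $S^4$ that fixes $\Sigma_1$ pointwise and carries $\Sigma_2'$ back to $\Sigma_2$) to obtain $\thicktilde{P}_1\sqcup\thicktilde{P}_2$. In that setup $\thicktilde{H}_1^\ast$ is the image of $H_1^\ast$ under an isotopy fixing $\Sigma_1$, so bullet three for $i=1$ holds \emph{by construction}; for $i=2$ both $H_2^\ast$ and $\thicktilde{H}_2^\ast$ are boundary-parallel $3$-balls with common boundary, and one invokes \cite{hartman}. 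The obstruction for bullet four then comes from comparing the non-split link $P_1\sqcup P_2=B(P)$ to the split link $\thicktilde{P}_1\sqcup\thicktilde{P}_2$ across one of the two hemispheres.
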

\begin{proof}
  As in the proof of Theorem~\ref{thm:main} (and indeed, the proof of the main theorem of \cite{handlebodypaper}), we obtain a smooth, nontrivial 3-knot $P$ so that the standard height function $h:S^5\to\mathbb{R}$ when restricted to $B(P)=P_1\sqcup P_2$ has one local minimum on each component, four index-1 critical points on $P_1$ four index-2 critical points on $P_1$, one local maximum on each component and no other critical points.

We obtain a new 3-link as follows (see Figure \ref{fig:schematic_proof_handlebody} for a schematic). Smoothly translate $P_2$ while preserving $h|_{P_2}$, until the resulting 3-knot $P_2'$ is split from $P_1$. Let $\Sigma_1,\Sigma_2,\Sigma_2'$ denote the intersections of $P_1,P_2,P'_2$ (respectively) with $h^{-1}(0)$. Then $\Sigma_2,\Sigma'_2$ are each 2-spheres that bound balls in the complement of $\Sigma_1$, and hence there is a smooth isotopy of $S^4$ fixing $\Sigma_1$ pointwise and taking $\Sigma'_2$ to $\Sigma_2$. Smoothly extend this isotopy over $S^5$, and let $\thicktilde{P}_1,\thicktilde{P}_2$ be the images of $P_1,P_2'$ of this isotopy. Then $\thicktilde{P}_1\sqcup \thicktilde{P}_2$ is a smooth 3-link intersecting $h^{-1}(0)$ with $\thicktilde{P}_1\cap h^{-1}(0)=\Sigma_1$, $\thicktilde{P}_2\cap h^{-1}(0)=\Sigma_2$. 
    
Let $W^T, W^B$ denote the 5-balls $h^{-1}[0,\infty)$ and $h^{-1}(-\infty,0]$, respectively. For $i=1,2$ and $*\in\{T,B\}$, let $H^*_i:=P_i\cap W^*$ and $\thicktilde{H}^*_i=\thicktilde{P}_i\cap W^*$. By Lemma~\ref{lem:bdryparallel}, the links $H^*_1\sqcup H^*_2$ and $\thicktilde{H}^*_1\sqcup \thicktilde{H}^*_2$ are each boundary-parallel in $W^*$. By construction, the genus-4 handlebodies $H_1^*,\thicktilde{H}_1^*$ are smoothly isotopic rel.\ boundary. The 3-balls $H_2^*,\thicktilde{H}_2^*$ are boundary-parallel with the same boundary and hence are isotopic rel.\ boundary in $W^*$ by \cite{hartman}. However, since $P_1\sqcup P_2$ is not split and $\thicktilde{P}_1\sqcup \thicktilde{P}_2$ is split, we conclude that either the pair $(H_1^T\sqcup H_2^T, \thicktilde{H}_1^T\sqcup \thicktilde{H}_2^T)$ are not topologically isotopic rel.\ boundary in $W^T$ or the pair $(H_1^B\sqcup H_2^B, \thicktilde{H}_1^B\sqcup \thicktilde{H}_2^B)$ are not topologically isotopic rel.\ boundary in $W^B$.

Perturbing $B(P)$ to add additional $(1,2)$-pairs to $h|_{B(P)}$ allows us to increase the genus of the resulting pair of non-isotopic handlebody links, as before. 
\end{proof}

\begin{figure}
\labellist
\pinlabel{$h$} at -5 60
\pinlabel{$W^T$} at 13 70
\pinlabel{$W^B$} at 13 50

\pinlabel{\huge{$\cong$}} at 91 60
\pinlabel{\small{Trans.}} at 185 67
\pinlabel{\small{Iso.}} at 317 67

\pinlabel{$B(P)$} at 55 120

\pinlabel{Unlink} at 260 120

\pinlabel{\textcolor{red}{$P_1$}} at 115 115
\pinlabel{\textcolor{blue}{$P_2$}} at 170 115

\pinlabel{\textcolor{red}{$H_1^T$}} at 127 90
\pinlabel{\textcolor{blue}{$H_2^T$}} at 178 90
\pinlabel{\textcolor{red}{$H_1^B$}} at 127 30
\pinlabel{\textcolor{blue}{$H_2^B$}} at 178 30

\pinlabel{\textcolor{red}{$P_1$}} at 215 115
\pinlabel{\textcolor{blue}{$P_2'$}} at 300 115

\pinlabel{\textcolor{red}{$\thicktilde{P}_1$}} at 340 115
\pinlabel{\textcolor{blue}{$\thicktilde{P}_2$}} at 400 115

\pinlabel{\textcolor{red}{$\thicktilde{H}_1^T$}} at 350 90
\pinlabel{\textcolor{blue}{$\thicktilde{H}_2^T$}} at 407 90
\pinlabel{\textcolor{red}{$\thicktilde{H}_1^B$}} at 350 30
\pinlabel{\textcolor{blue}{$\thicktilde{H}_2^B$}} at 407 30
\endlabellist
\vspace{.1in}
\includegraphics[width=130mm]{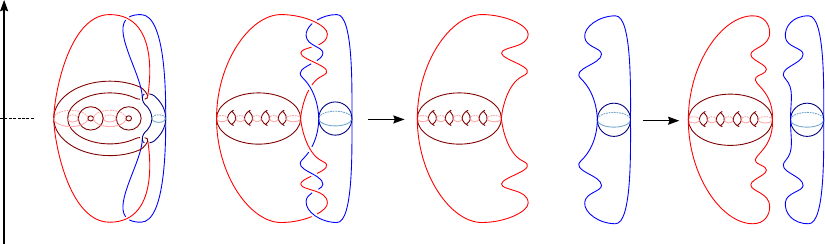}
\vspace{.1in}
    \caption{A schematic of the construction of Theorem~\ref{thm:binghandlebody}. First and second: the Bing double $B(P)=P_1\sqcup P_2$ of a nontrivial 3-knot $P$. We include the second frame to emphasize that $B(P)$ intersects $h^{-1}(0)$ in an unlink. Third: we translate $P_2$ (while preserving $h|_{P_2}$) to obtain a 3-knot $P_2'$ split from $P_1$. Fourth: we smoothly isotope $h^{-1}(0)$ (while keeping the intersection of $P_1$ with $h^{-1}(0)$ fixed) and extend over all of $S^5$ so as to isotope $P_1,P_2'$ to 3-knots $\thicktilde{P}_1,\thicktilde{P}_2$ whose intersections with $h^{-1}(0)$ agree with those of $P_1,P_2$. For $i\in\{1,2\}$ and $*\in\{B,T\}$, the handlebodies $H_i^*$ and $\thicktilde{H}_i^*$ are smoothly isotopic rel.\ boundary in the 5-ball $W^*$.}\label{fig:schematic_proof_handlebody}
\end{figure}

\begin{remark}
 One can remove the ambiguity from the final conclusion in the proof of Theorem~\ref{thm:binghandlebody} via repeated applications of \cite{hirose} (see discussion in \cite{handlebodypaper}), but this is tedious and not particularly enlightening in practice -- though \cite{hirose} is in principle constructive, in practice it is very difficult to implement the construction.
\end{remark}

\bibliographystyle{alpha}
\bibliography{biblio}

\begin{thebibliography}{HKM23}

\bibitem[Alt78]{althoen}
Steven~C. Althoen.
\newblock A van {K}ampen theorem for {$\pi \sb{2}$}.
\newblock {\em J. Pure Appl. Algebra}, 10(3):257--269, 1977/78.

\bibitem[BG]{budneygabai}
Ryan {Budney} and David {Gabai}.
\newblock {Knotted 3-balls in $S^4$}.
\newblock ArXiv:1912.09029 [math.GT], Apr. 2021.

\bibitem[Har]{hartman}
Daniel Hartman.
\newblock {Unknotting 3-Balls in the 5–Ball}.
\newblock ArXiv:2206.11243 [math.GT], June 2022.

\bibitem[Hir02]{hirose}
Susumu Hirose.
\newblock On diffeomorphisms over surfaces trivially embedded in the 4-sphere.
\newblock {\em Algebr. Geom. Topol.}, 2:791--824, 2002.

\bibitem[HKM23]{handlebodypaper}
Mark Hughes, Seungwon Kim, and Maggie Miller.
\newblock Knotted handlebodies in the 4-sphere and 5-ball.
\newblock {\em {\emph{to appear in }}J. Eur. Math. Soc.}, 2023.

\bibitem[IKMT]{konno}
Nobuo Iida, Hokuto Konno, Anubhav Mukherjee, and Masaki Taniguchi.
\newblock {Diffeomorphisms of 4-manifolds with boundary and exotic embeddings}.
\newblock ArXiv:2203.14878 [math.GT], Mar. 2022.

\bibitem[KS17]{kreck}
Matthias Kreck and Yang Su.
\newblock On 5-manifolds with free fundamental group and simple boundary links
  in {$S^5$}.
\newblock {\em Geom. Topol.}, 21(5):2989--3008, 2017.

\bibitem[Lau73]{laudenbach}
F.~Laudenbach.
\newblock Sur les {$2$}-sph\`eres d'une vari\'{e}t\'{e} de dimension {$3$}.
\newblock {\em Ann. of Math. (2)}, 97:57--81, 1973.

\bibitem[LP72]{laudenbachpoenaru}
Fran\c{c}ois Laudenbach and Valentin Po\'{e}naru.
\newblock A note on {$4$}-dimensional handlebodies.
\newblock {\em Bull. Soc. Math. France}, 100:337--344, 1972.

\bibitem[Qui82]{quinn}
Frank Quinn.
\newblock Ends of maps. {III}. {D}imensions {$4$} and {$5$}.
\newblock {\em J. Differential Geometry}, 17(3):503--521, 1982.

\bibitem[Sat04]{satoh}
Shin Satoh.
\newblock A note on unknotting numbers of twist-spun knots.
\newblock {\em Kobe J. Math.}, 21(1-2):71--82, 2004.

\bibitem[Sha68]{shaneson}
Julius~L. Shaneson.
\newblock Embeddings with codimension two of spheres in spheres and
  {$H$}-cobordisms of {$S\sp{1}\times S\sp{3}$}.
\newblock {\em Bull. Amer. Math. Soc.}, 74:972--974, 1968.

\bibitem[Sta63]{stallings}
John Stallings.
\newblock On topologically unknotted spheres.
\newblock {\em Ann. of Math. (2)}, 77:490--503, 1963.

\bibitem[Sto79]{stoltzfus}
Neal~W. Stoltzfus.
\newblock Isometries of inner product spaces and their geometric applications.
\newblock In {\em Geometric topology ({P}roc. {G}eorgia {T}opology {C}onf.,
  {A}thens, {G}a., 1977)}, pages 527--541. Academic Press, New York-London,
  1979.

\end{thebibliography}
\end{document}